\newtheorem{thm}{Theorem}[section]
\newtheorem{prop}[thm]{Proposition}
\newtheorem{lemma}[thm]{Lemma}
\newtheorem{rem}[thm]{Remark}
\newtheorem{defi}[thm]{Definition}
\newcommand{\R}{\mathbb{R}}             
\DeclarePairedDelimiter\norm{\lVert}{\rVert}
\newcommand{\bu}{{\bf u}}
\newcommand{\bv}{{\bf v}}
\newcommand{\bV}{{\bf V}}
\newcommand{\bw}{{\bf w}}
\newcommand{\be}{{\bf e}}
\newcommand{\bN}{{\bf N}}
\newcommand{\barx}{{\bar x}}
\newcommand{\Section}[1]{\section{#1} \setcounter{equation}{0}}
\begin{document}

\title{Ramified local isometric embeddings\\ of singular Riemannian metrics}
\author{Alberto Enciso $^{\,1}$ 
and 
Niky Kamran  $^{\,2}$\\  
$^1$  \small Instituto de Ciencias Matem\'aticas, Consejo Superior de Investigaciones Cient\'ificas,\\
 \small 28049  Madrid, Spain. \\
\small Email:  aenciso@icmat.es\\
$^2$ \small Department of Mathematics and Statistics, McGill University,\\ \small  Montreal, QC, H3A 0B9, Canada. \\
\small Email: niky.kamran@mcgill.ca \\}





\maketitle


\begin{abstract}

  In this paper, we are concerned with the existence of local isometric embeddings into Euclidean space for analytic Riemannian metrics $g$, defined on a domain~$U\subset \R^n$, which are singular in the sense that the determinant of the metric tensor is allowed to vanish at an isolated  point (say the origin). Specifically, we show that, under suitable technical assumptions, there exists a local analytic isometric embedding $\bu$ from $(U',\Pi^*g)$ into Euclidean space $\mathbb{E}^{(n^2+3n-4)/2}$, where $\Pi:U' \to U\backslash\{0\}$ is a finite Riemannian branched cover of a deleted neighborhood of the origin. Our result can thus be thought of as a generalization of the classical Cartan-Janet Theorem to the singular setting in which the metric tensor is degenerate at an isolated point. Our proof uses Leray's ramified Cauchy-Kovalevskaya Theorem for analytic differential systems, in the form obtained by Choquet-Bruhat for non-linear systems.


\vspace{0.5cm}

\noindent \textit{Keywords}. Local isometric embeddings, singular metrics, ramified Cauchy problem.


\noindent \textit{2010 Mathematics Subject Classification}. 53C42, 35A10, 35A30.

\end{abstract}

\tableofcontents


\Section{Introduction} \label{Intro}
A classical theorem of Cartan \cite{C} and Janet \cite{Jan} states that any $n$-dimensional analytic Riemannian manifold $(M,g)$ can be locally isometrically embedded by means of an analytic mapping in $(n(n+1)/2)$-dimensional Euclidean space ${\mathbb{E}}^{n(n+1)/2}$. Cartan's proof rests on the use of the Cartan-K\"ahler Theorem for analytic involutive exterior differential systems, which in turn relies on the iterative application of the Cauchy-Kovalevskaya Theorem, in order to construct the graph of the local isometric embedding one dimension at a time. The hypothesis of analyticity is thus essential in this approach. 

The extension of the Cartan-Janet Theorem to global isometric embeddings of Riemannian manifolds which are only differentiable rather than analytic was obtained much later in the celebrated work of Nash \cite{N}, who took a very different approach based on the development of a powerful open mapping theorem adapted to the problem at hand. Important subsequent improvements of Nash's result are due to Gromov and Rokhlin \cite{Gr}, and G\"unther \cite{Gu}, who gave a much simplified proof of the embedding theorem, including sharper bounds on the codimension. We refer for example to \cite{HH} for an account of these results and a discussion of the bibliography. 

Our purpose in this paper is to consider a much simpler kind of extension of the Cartan-Janet Theorem, one which is still local in nature and in which the metric is still assumed to be analytic, but where the Riemannian metric is allowed to be singular by letting the determinant of the metric tensor admit a simple zero at an isolated point, taken for example at the origin of a local coordinate system. In the framework developed by Cartan and Janet, this corresponds to having the iterative step in the construction of the graph of the embedding being now governed by a Cauchy-Kovalevskaya system for which the Cauchy data are characteristic at a point on the hypersurface supporting the Cauchy data. The Cauchy-Kovalevskaya Theorem is therefore not applicable in this situation.

Cauchy problems of this type have however been studied by Leray and collaborators in an extensive series of papers dealing with systems of Cauchy-Kovalevskaya type in which the Cauchy data are allowed to be characteristic along submanifolds contained in the hypersurface supporting the initial data (see for example~\cite{L},\cite{GKL}, for linear systems, and Choquet-Bruhat \cite{CB} for the non-linear case). The main content of these works, as far as the applications we have in mind in the present paper are concerned, is that the singularities of the Cauchy problem corresponding to the above characteristic submanifolds give rise to solutions of the Cauchy problem which are multi-valued, with a ramification locus whose support can be described geometrically in terms of the conoids generated by the bicharacteristic curves emanating from the characteristic points. Furthermore, provided that the singularities of the Cauchy data are suitably ``non-degenerate'' in a given technical sense, the solution can be uniformized by an analytic map, obtained by solving the Cauchy problem for an auxiliary Hamilton-Jacobi equation whose Hamiltonian is determined by the principal part of the operator. The uniformizing map so obtained then gives rise to analytic solutions of the differential system which are defined in a branched covering of the original domain.

These results strongly suggest that for the singular metrics being considered in this paper, Leray's uniformization theorem might be used in a suitable way instead of the Cauchy-Kovalevskaya Theorem to ensure the key iterative step which consists in the extension of local isometric embeddings. This is precisely the strategy that we apply in our paper. In order to state our result, let us first introduce the class of singular metrics that we can handle through this approach, which we will henceforth refer to as {\em admissible}\/. We find the most transparent way to do so is through a normal form. Here we write points in~$\R^n$ as $x=(x',x_n)$ with $x'\in\R^{n-1}$, the Greek indices range from 1 to~$n$, while the Latin indices range from 1 to $n-1$:

\begin{defi}\label{D.admissible}
We say that a Riemannian metric $g$ defined on a domain $U\ni 0$ of $\mathbb{R}^n$ has an admissible singularity at the origin if it is of the form 
\begin{equation}\label{singmetricexplicit}
g=(\norm{x'}^{2}+x_{n}^{2l})\,F_{0}(x)\,dx_{n}^{2}+g_{kl}\,dx_{k}\,dx_{l}\,,
\end{equation}
where $l\geq 1$ is an integer, where $F$ and $g_{jk}$ are analytic with $F_{0}(0)>0$, where the quadratic form $(g_{jk})$ is positive definite, and where 
\begin{equation}\label{normderiv}
\partial_{n}g_{jk}(x',0)=O(\norm{x'}^{2})\,.
\end{equation}
\end{defi}

In Section~\ref{SingularSection} we shall motivate this normal form, which describes metrics which are degenerate (in a fairly mild sense) in exactly one direction. We will show (see Proposition~\ref{P.bs}) that the assumption that there are no cross terms (i.e., terms in $dx_j\, dx_n$) in the metric is not essential, as even in this singular case one can always put metrics in this form through a suitable analytic change of coordinates. In contrast, the technical assumption about $\partial_n g_{jk}$ will be important.

We are now ready to state our main result:

\begin{thm}\label{T.main}
  Let $g$ be an analytic Riemannian metric defined on a domain $U\subset\R^n$ with an admissible singularity at the origin. Then there exists a local analytic isometric embedding $\bu: (U',\Pi^*g)\to \mathbb{E}^{(n^2+3n-4)/2}$, where $\Pi:U'\to U\backslash\{0\}$ is a finite Riemannian branched cover of~$(U\backslash\{0\},g)$.
\end{thm}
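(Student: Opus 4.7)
My strategy is to adapt the classical inductive proof of the Cartan-Janet theorem to the singular setting, replacing the Cauchy-Kovalevskaya step with the nonlinear ramified Cauchy-Kovalevskaya theorem of Choquet-Bruhat~\cite{CB}. The normal form~\eqref{singmetricexplicit} is constructed precisely so that the restriction of $g$ to $\{x_n=0\}$ is the genuine analytic Riemannian metric $h = g_{jk}(x',0)\,dx_j\,dx_k$, the degeneracy being carried entirely by the coefficient $g_{nn} = (\norm{x'}^2 + x_n^{2l})F_0$. I first apply the classical Cartan-Janet theorem to $h$ to obtain an analytic isometric embedding $\bu_0:(\{x_n=0\},h)\to\mathbb{E}^{n(n-1)/2}$, and then seek to extend $\bu_0$ (trivially augmented by zero components in the extra $2(n-1)$ target dimensions) to the desired embedding $\bu:U'\to\mathbb{E}^{(n^2+3n-4)/2}$ by evolution in $x_n$.

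The extension is set up as a Cauchy problem in $x_n$. Exploiting the absence of cross $dx_j\,dx_n$ terms in~\eqref{singmetricexplicit}, one prescribes
\[
\bu(x',0)=(\bu_0(x'),0,\ldots,0),\qquad \partial_n\bu(x',0)=\sqrt{F_0(x',0)}\,(0,\ldots,0,\,x_1,\ldots,x_{n-1},\,0,\ldots,0),
\]
with the nonzero components of $\partial_n\bu(x',0)$ placed in dimensions transverse to the image of $\bu_0$. This Cauchy datum is analytic and enforces $|\partial_n\bu(x',0)|^2 = g_{nn}(x',0)$ together with $\partial_n\bu(x',0)\cdot\partial_j\bu_0 = 0$, so that $\bu^*e = g$ holds on $\{x_n=0\}$. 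Differentiating $\bu^*e = g$ in $x_n$ and reducing the result to a quasilinear second-order system of Cartan-Janet type yields a PDE whose leading $\partial_n^2$ coefficient is controlled by the Gram matrix of $\partial_n\bu$ with $\partial_j\bu$. Because $\partial_n\bu(x',0)$ vanishes at $x'=0$ (forced by $g_{nn}(0)=0$), this Gram matrix becomes singular, so the Cauchy hypersurface $\{x_n=0\}$ is non-characteristic outside the origin and characteristic precisely at the single point $x=0$. Classical Cauchy-Kovalevskaya thus fails exactly there.

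This is the setting of Leray's ramified Cauchy problem. Condition~\eqref{normderiv} is tailored to ensure that the potentially singular quantities $g^{nn}\partial_n g_{jk}$ remain bounded at the origin, which is the non-degeneracy assumption needed for the auxiliary Hamilton-Jacobi equation governing Leray's uniformizer to be analytically solvable. Applying the nonlinear ramified Cauchy-Kovalevskaya theorem of Choquet-Bruhat~\cite{CB} then yields an analytic solution $\bu$ of the derived second-order system on a finite branched cover $\Pi:U'\to U\setminus\{0\}$, whose branch locus lies in the characteristic conoid emanating from the origin. Finally, verifying that this $\bu$ solves the full system $\bu^*e=\Pi^*g$ (and not only the differentiated one) follows, as in the usual Cartan-Janet argument, by noting that the discrepancy tensor $E:=\bu^*e-\Pi^*g$ vanishes on $\{x_n=0\}$ by construction and satisfies a linear homogeneous ODE in $x_n$, which forces $E\equiv 0$ on $U'$. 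I expect the main obstacle to be the careful translation of the geometric isometric embedding system into a Cauchy problem in the precise form required by~\cite{CB}, and verifying that hypothesis~\eqref{normderiv} is exactly the non-degeneracy condition at the origin needed for that ramified theorem to apply.
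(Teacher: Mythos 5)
Your overall strategy coincides with the paper's: reduce to the differentiated second-order system \eqref{II.1}--\eqref{II.3}, construct Cauchy data on $\{x_n=0\}$ that are characteristic only at the origin, and invoke Choquet-Bruhat's nonlinear version of Leray's ramified Cauchy--Kovalevskaya theorem. However, the construction of the Cauchy data --- which is the entire technical content of the argument (Proposition~\ref{P.initial} in the paper) --- does not work as you describe, for two independent reasons.

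First, your data violate the constraint \eqref{10}, i.e.\ $\partial_{kl}\bu_0\cdot\bu_1=-\tfrac12\partial_n g_{kl}(\cdot,0)$. You place all nonzero components of $\bu_1=\partial_n\bu(\cdot,0)$ in coordinate directions transverse to the subspace containing $\bu_0$ and all of its derivatives, so $\partial_{kl}\bu_0\cdot\bu_1\equiv 0$, whereas the right-hand side is only $O(\norm{x'}^2)$ by \eqref{normderiv}, not identically zero. Without \eqref{10} the second-order system is not equivalent to the embedding equations: in your final ``discrepancy tensor'' step, $E=\bu^*e-\Pi^*g$ vanishes on $\{x_n=0\}$ but $\partial_n E$ does not, and the propagation argument fails at first order. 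The paper handles this by decomposing $\bu_1={\bf N}+\sum_j x_j G\,\be_j$, where ${\bf N}$ is the unique normal field solving \eqref{C4} and the second term restores $\norm{\bu_1}^2=\norm{x'}^2F$; the hypothesis \eqref{normderiv} is used precisely to guarantee $\norm{{\bf N}}=O(\norm{x'}^2)$ so that $G=(F-\norm{{\bf N}}^2/\norm{x'}^2)^{1/2}$ is analytic --- not, as you suggest, to control $g^{nn}\partial_ng_{jk}$ in a Hamilton--Jacobi equation. Second, and fatally for the application of Leray's theorem, your $\bu_0$ is a Cartan--Janet embedding into $\mathbb{E}^{n(n-1)/2}$ padded with zeros, so the $(n-1)+n(n-1)/2$ vectors $\{\partial_j\bu_0,\partial_{jk}\bu_0\}$ are confined to an $n(n-1)/2$-dimensional subspace and are therefore linearly dependent at \emph{every} point of the initial hypersurface. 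The characteristic determinant $\mathcal{A}_*$ then vanishes identically on $\{x_n=0\}$, not just at the origin; the initial surface is everywhere characteristic, the origin cannot be a non-exceptional characteristic point in the sense of Definition~\ref{nondeg}, and Theorem~\ref{T.CB} does not apply. (Your claim that the characteristic set is controlled solely by the Gram matrix of $\partial_n\bu$ with $\partial_j\bu$ is incorrect: the system is solved for $\partial_{nn}\bu$ by inverting the matrix built from $\partial_j\bu$, $\partial_n\bu$ \emph{and} $\partial_{jk}\bu$.) This is why the paper first constructs a \emph{free} embedding $\bw$ of a perturbed metric into $\mathbb{E}^{N-1}$ with $N=n(n+1)/2$, using the highly oscillatory correction $\bV$ to force linear independence of first and second derivatives, and only then appends the extra $n-1$ dimensions carrying $\bu_1$; this yields $\Delta(x')=x_1\Delta_0(x')$ with $\Delta_0(0)\neq0$, i.e.\ a simple zero of $\mathcal{A}_*$ at the origin only. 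You also omit the augmentation \eqref{II.last} needed to make the underdetermined system square before Choquet-Bruhat's theorem can be invoked, though that is a comparatively minor point.
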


We observe from the statement of Theorem~\ref{T.main} that the codimension of the local isometric embedding that we obtain for the class of singular metrics being considered is larger than the one given the classical Cartan-Janet Theorem. This can be understood from the fact that singularity of the Riemannian metric $g$ at the origin requires more dimensions in the ambient space in order to construct admissible initial data for the key iterative step in which we apply Leray's Theorem. Put differently, we can think of Leray's Theorem as providing a systematic way of resolving ``nondegenerate'' singularities that appear in the Cauchy problem for a Cauchy-Kovalevskaya type system by passing to a branched covering whose geometry is determined by the principal part of the operator. 

Our paper is organized as follows. In Section \ref{PDESection}, we set up the notation and derive in Proposition \ref{equivalence} the constrained Cauchy problem for the system of PDEs that governs the extension of local isometric embeddings in the case in which the coefficient $g_{nn}$ of the metric (\ref{metric}) has not been normalized. This is only a slight variation of the classical set-up of the extension problem as given in \cite{Jac} and \cite{HH}, but it is needed in order to conveniently encode in terms of the Cauchy data for the extension problem the singularity assumption that the determinant of the metric tensor has an isolated zero at the origin corresponding to an admissible singularity. In Section \ref{SingularSection}, we show that in the singular case in which the coefficient $g_{nn}$ of the metric tensor has an isolated zero at the origin, one can nevertheless still put the metric in a normal form without cross terms by means of an analytic change of coordinates. This result, which provides the starting point of the extension problem for local isometric embeddings, is the object of Proposition~\ref{P.bs}, which in turn motivates the definition of the class of admissible singular metrics that we introduced above. Section \ref{RamifiedEmbedSection}, which is the core of the paper, contains both the key technical result needed in order to use Choquet-Bruhat's generalization of Leray's theorem for the construction of ramified extensions, namely the proof of Proposition~\ref{P.initial} giving the existence of Cauchy data which satisfy the appropriate rank conditions on the complement of the origin in the hypersurface supporting the Cauchy data, and the proof of our main result, that is Theorem~\ref{T.main}. The Appendix is devoted to giving a reasonably self-contained summary of the essentials of Leray's and Choquet-Bruhat's theorems on the ramification and uniformization of the solution of the Cauchy problem for analytic differential systems near a characteristic point. In particular, the non-degeneracy conditions required for the uniformization to correspond to algebroid singularities of the solution along the ramification locus are stated explicitly.

We conclude by remarking that the results of Leray and Choquet-Bruhat on the ramification and uniformization of the solution of the Cauchy problem in general singular settings in which the Cauchy data are allowed to be characteristic on submanifolds of the hypersurface supporting the data will probably be relevant in singular geometric situations which are broader in scope than the one considered this paper. Ultimately, one might hope for a generalized Cartan-K\"ahler Theorem involving a more general notion of involutivity, producing ramified integral manifolds and corresponding uniformization maps. We think that the development of such a theorem, while entailing a significant amount of technical content, would be a goal worth pursuing with specific geometric applications in mind.

\section{A PDE system for the extension of local isometric embeddings without a normal form}\label{PDESection}

Let $U$ be an open neighbourhood of the origin in $\mathbb{R}^{n}$, with local coordinates $x=(x_{\alpha})_{1\leq \alpha \leq n}$. We shall also use the notation $x=(x',x_n)$ for the local coordinates, where $x'=(x_{k})_{1\leq k\leq n-1}$. On $U$, we consider for now a smooth Riemannian metric of the form
\begin{equation}\label{metric}
g=g_{nn}(x)\, dx_{n}^{2}+g_{kl}(x)\, dx_{k}\, dx_{l}\,.
\end{equation}
where the sum of $k,l$ ranges from $1$ to $n-1$, but where the coefficients $g_{nn}$ and $g_{kl}$ are allowed to be functions of all the local coordinates $x=(x_{\alpha}), 1\leq \alpha \leq n$.

Given the positive-definiteness and smoothness of $g$ in $U$, we may with no loss of generality choose normal coordinates in a neighbourhood of the origin in $U$ in which the normalization condition $g_{nn}=1$ is satisfied.  However we shall be interested in Section \ref{RamifiedEmbedSection} in constructing local isometric embeddings in the singular case in which $g_{nn}$ has an isolated zero of a certain specific type at the origin, so that such a coordinate system will in general fail to exist. The precise conditions that we will to impose on the order of vanishing of $g_{nn}$ at the origin are stated as Definition~1.1. In anticipation of this, we shall keep the component $g_{nn}$ of the metric $g$ un-normalized while still working initially under the assumption that $g$ is a non-singular smooth Riemannian metric in $U$, so that $g_{nn}\neq 0$ in $U$.

A smooth map $\bu: U \to \mathbb{E}^{N}$ of rank $n$ will produce a local isometric embedding of $(U,g)$ into the $N$-dimensional  Eucliden space $ \mathbb{E}^{N}$ if and only if it satisfies the system of  ${n(n+1)}/2$ first-order PDEs given by 
\begin{align}
\norm{\partial_{n}{\bu}}^{2}&=g_{nn}\,,\label{I.1}\\ 
\partial_{k}{\bu}\cdot\partial_{n}{\bu}&=0\,,\label{I.2}\\
\partial_{k}{\bu}\cdot\partial_{l}{\bu}&=g_{kl}\,,\label{I.3}
\end{align}
where we use a dot to denote the Euclidean inner product in the ambient Euclidean space $\mathbb{E}^{N}$. We shall consider Cauchy data for the system (\ref{I.1}), (\ref{I.2}), (\ref{I.3}) along the hypersurface $x_n=0$, given by smooth maps $\bu_{0},\bu_{1}$,
\begin{equation}\label{initial}
\bu |_{x_n=0}=\bu_{0}\,,\quad \partial_{n}\bu |_{x_n=0}=\bu_{1}\,,
\end{equation}  
where the data are constrained in view of (\ref{I.1}),  (\ref{I.2}) and  (\ref{I.3}) by
\begin{align}
\norm{{\bu}_{1}}^{2}&=g_{nn}(\cdot,0)\,,\label{7}\\ 
\partial_{k}{\bu}_{0}\cdot{\bu_{1}}&=0\,,\label{8}\\
\partial_{k} \bu_{0}\cdot\partial_{l} \bu_{0}&=g_{kl}(\cdot,0)\,,\label{9}
\end{align}

Following the classical procedure employed in the proof of the Cartan-Janet Theorem, (see \cite{Jac}, \cite{HH}), we differentiate the equations (\ref{I.1}), (\ref{I.2}), (\ref{I.3}) with respect to $x_n$ to obtain the system of ${n(n+1)}/2$ second-order PDEs given by
\begin{align}
\partial_{n}\bu\cdot\partial_{nn}\bu&=\frac{1}{2}\partial_{n}g_{nn}\,,\label{II.1}\\
\partial_{k}\bu\cdot\partial_{nn}\bu&=-\frac{1}{2}\partial_{k}g_{nn}\,,\label{II.2}\\
\partial_{kl}\bu\cdot\partial_{nn}\bu&=\partial_{kn}\bu\cdot\partial_{ln}\bu-\frac{1}{2}\partial_{nn}g_{kl}-\frac{1}{2}\partial_{kl}g_{nn}\,.  \label{II.3}
\end{align}
It is straightforward to show following a procedure identical to the one used in the case when $g_{nn}=1$ ((see for example \cite{HH}),) that the Cauchy data $\bu_{0}\,,\bu_{1}$ must satisfy, in addition to (\ref{7}), (\ref{8}), (\ref{9}), the constraint 
\begin{equation}\label{10}
\partial_{kl} \bu_{0}\cdot\bu_{1}=-\frac{1}{2}\partial_{n}g_{kl}(\cdot,0)\,.
\end{equation}
One then obtains, in complete analogy with the case in which the $g_{nn}=1$, the following:
\begin{prop}\label{equivalence}
Consider a smooth Riemannian metric (\ref{metric}). The system of first-order PDEs (\ref{I.1}), (\ref{I.2}), (\ref{I.3}) governing local isometric embeddings of $(U,g)$ into ${\mathbb E}^{N}$, with initial data (\ref{initial}) constrained by (\ref{7}) to (\ref{9}), is equivalent to the system of second-order PDEs (\ref{II.1}), (\ref{II.2}), (\ref{II.3}) with initial data (\ref{initial}) constrained by (\ref{7}) to (\ref{9}) and (\ref{10}).
\end{prop}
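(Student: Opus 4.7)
The equivalence is proved in two directions. The forward implication is essentially by definition: if $\bu$ satisfies \eqref{I.1}--\eqref{I.3} on a neighborhood of $\{x_n=0\}$, then differentiating each equation in $x_n$ immediately produces \eqref{II.1}--\eqref{II.3}, while restricting \eqref{I.1}--\eqref{I.3} to $\{x_n=0\}$ gives the constraints \eqref{7}--\eqref{9}. For the remaining constraint \eqref{10}, one differentiates \eqref{I.2} in the tangential variable $x_l$ to obtain $\partial_{kl}\bu\cdot\partial_n\bu+\partial_k\bu\cdot\partial_{ln}\bu=0$, symmetrizes in $k,l$, uses the $x_n$-derivative of \eqref{I.3} to identify the right-hand side with $-\tfrac12\partial_n g_{kl}$, and restricts to $x_n=0$.

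The nontrivial direction is the converse. I would argue it by introducing the three ``defect'' functions
\begin{equation*}
A:=\norm{\partial_n\bu}^2-g_{nn},\qquad B_k:=\partial_k\bu\cdot\partial_n\bu,\qquad C_{kl}:=\partial_k\bu\cdot\partial_l\bu-g_{kl},
\end{equation*}
and showing, in this order, that each vanishes identically under the hypotheses of the proposition. The constraints \eqref{7}--\eqref{9} immediately give $A|_{x_n=0}=B_k|_{x_n=0}=C_{kl}|_{x_n=0}=0$, while \eqref{10} will be what is needed to kill the first $x_n$-derivative of $C_{kl}$ on $\{x_n=0\}$.

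The computations proceed as follows. First, \eqref{II.1} gives $\partial_n A=2\partial_n\bu\cdot\partial_{nn}\bu-\partial_n g_{nn}=0$, so $A\equiv 0$. Next, writing $\partial_n B_k=\partial_{kn}\bu\cdot\partial_n\bu+\partial_k\bu\cdot\partial_{nn}\bu$, using \eqref{II.2} on the second summand, and using $\partial_{kn}\bu\cdot\partial_n\bu=\tfrac12\partial_k\norm{\partial_n\bu}^2=\tfrac12\partial_k g_{nn}$ (which holds because $A\equiv 0$) on the first, one obtains $\partial_n B_k=0$, so $B_k\equiv 0$. Then differentiating $B_k\equiv 0$ tangentially yields $\partial_k\bu\cdot\partial_{ln}\bu=-\partial_{kl}\bu\cdot\partial_n\bu$; feeding this (and its counterpart with $k,l$ swapped) into $\partial_n C_{kl}$ shows $\partial_n C_{kl}=-2\partial_{kl}\bu\cdot\partial_n\bu-\partial_n g_{kl}$, which evaluates to zero on $\{x_n=0\}$ precisely by the constraint \eqref{10}. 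Finally, differentiating once more in $x_n$, substituting \eqref{II.3} for $\partial_{kl}\bu\cdot\partial_{nn}\bu$, and using $A\equiv 0$ twice-differentiated tangentially to express $\partial_n\bu\cdot\partial_{kln}\bu$ in terms of $\partial_{kl}g_{nn}$ and $\partial_{kn}\bu\cdot\partial_{ln}\bu$, one finds $\partial_{nn}C_{kl}=0$. Since $C_{kl}$ and $\partial_n C_{kl}$ both vanish at $x_n=0$, this forces $C_{kl}\equiv 0$.

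The main obstacle is purely bookkeeping: the identity $\partial_{nn}C_{kl}=0$ requires combining \eqref{II.3} with the second-order tangential consequences of $A\equiv 0$ and $B_k\equiv 0$ so that all terms involving third derivatives of $\bu$ cancel, leaving only a combination of derivatives of the metric coefficients that sums to zero. No new ideas enter compared to the standard $g_{nn}=1$ proof in \cite{HH},\cite{Jac}; one simply must carry the extra terms arising from $\partial g_{nn}\neq 0$ through the calculation and check that they cancel against the added right-hand sides of \eqref{II.1} and \eqref{II.2}.
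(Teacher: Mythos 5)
Your argument is correct and is precisely the standard defect-function argument that the paper invokes only by reference (it omits the proof, citing the $g_{nn}=1$ case in \cite{Jac} and \cite{HH}); your computation correctly carries the extra $\partial g_{nn}$ terms through, and the key identity $\partial_{nn}C_{kl}=0$ does close up exactly as you describe, with the $\partial_{kl}g_{nn}$ term from \eqref{II.3} cancelling against the second tangential derivative of $\norm{\partial_n\bu}^2=g_{nn}$. No gap.
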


We conclude this section by recalling that in the hypotheses of the classical local isometric embedding theorem of Cartan-Janet \cite{C,Jan,Jac}, one needs to assume that the metric $g$ is analytic in $U$, given that the proof consists in constructing the local isometric embedding by induction on $n$, applying the Cauchy-Kovalevskaya Theorem at each stage to the system (\ref{II.1}), (\ref{II.2}), (\ref{II.3}) with initial data satisfying (\ref{7}) to (\ref{9}) and (\ref{10}). The Cartan-Janet Theorem then ensures the existence of a local isometric embedding when $N=n(n+1)/2$. For future reference, we summarize the iterative step in the proof as a proposition, which follows directly from the Cauchy-Kovalevskaya Theorem by putting the system of PDEs (\ref{II.1}), (\ref{II.2}), (\ref{II.3}) in Cauchy-Kovalevskaya form by solving for $\partial_{nn}\bu$, using a rank hypothesis on the initial data, {see \cite{Jac}},\cite{HH}:
\begin{prop}\label{iteration}
Consider an analytic Riemannian metric (\ref{metric}). The equivalent system of second-order PDEs (\ref{II.1}), (\ref{II.2}), (\ref{II.3}) with initial data $\bu_{0},\bu_{1}$  constrained by (\ref{7}) to (\ref{9}) and (\ref{10}) governing local isometric embeddings of $(U,g)$ into ${\mathbb E}^{N}$ admits a unique local analytic solution $\bu$ if the set $\{\partial_{k}\bu_{0},\bu_{1}, \partial_{kl}\bu_{0}\,,\,1\leq k,l \leq n-1\}$ is linearly independent at every point of the initial hypersurface $x_{n}=0$.
\end{prop}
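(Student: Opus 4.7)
The plan is to put the second-order system (\ref{II.1})--(\ref{II.3}) into Cauchy-Kovalevskaya normal form and then invoke the classical Cauchy-Kovalevskaya theorem, after which Proposition~\ref{equivalence} will automatically return us to the original first-order system. First I would observe that the $n(n+1)/2$ equations (\ref{II.1})--(\ref{II.3}) are linear in $\partial_{nn}\bu$ and can be assembled into a single vector equation
\[
M(x,\bu,\partial\bu)\,\partial_{nn}\bu = b(x,\bu,\partial\bu),
\]
in which the rows of the matrix $M$ are the vectors $\partial_n\bu$, $\partial_k\bu$ (for $1\le k\le n-1$) and $\partial_{kl}\bu$ (for $1\le k\le l\le n-1$), while the right-hand side $b$ is analytic in its arguments. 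Restricted to the initial hypersurface $\{x_n=0\}$, these rows become $\bu_1$, $\partial_k\bu_0$ and $\partial_{kl}\bu_0$, which together form exactly the $n(n+1)/2$ vectors appearing in the rank hypothesis.

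With $N=n(n+1)/2$ as in Cartan--Janet, $M|_{x_n=0}$ is a square matrix whose pointwise invertibility is precisely the assumed linear independence. By continuity, $M$ remains invertible in an open neighborhood of the jet of $(\bu_0,\bu_1)$ along $\{x_n=0\}$, so I can solve algebraically for $\partial_{nn}\bu$ and obtain the Cauchy-Kovalevskaya form
\[
\partial_{nn}\bu = M(x,\bu,\partial\bu)^{-1}\,b(x,\bu,\partial\bu),
\]
whose right-hand side is analytic there. In geometric terms, this is just the statement that the rank hypothesis is equivalent to $\{x_n=0\}$ being non-characteristic for the second-order system.

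With this normal form available, the classical Cauchy-Kovalevskaya theorem produces a unique local analytic solution $\bu$ realizing the prescribed Cauchy data (\ref{initial}). Since by assumption these data satisfy the full list of compatibility constraints (\ref{7})--(\ref{10}), Proposition~\ref{equivalence} automatically promotes $\bu$ to a solution of the original first-order system (\ref{I.1})--(\ref{I.3}), yielding the desired local analytic isometric embedding of $(U,g)$ into~$\mathbb{E}^N$.

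No serious obstacle is expected here: the argument is essentially the algebraic observation that the rank condition on $\{\partial_k\bu_0,\bu_1,\partial_{kl}\bu_0\}$ makes $M$ invertible, followed by a textbook invocation of Cauchy-Kovalevskaya. The conceptual point worth emphasizing is that it is precisely this non-characteristic condition that will fail at the origin in the singular situation studied in Section~\ref{RamifiedEmbedSection}, motivating the replacement of the Cauchy-Kovalevskaya theorem by Leray's and Choquet-Bruhat's ramified versions in the proof of Theorem~\ref{T.main}.
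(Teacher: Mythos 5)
Your proposal is correct and follows exactly the route the paper itself indicates: solve the linear (in $\partial_{nn}\bu$) system using the invertibility of the square matrix built from $\{\partial_k\bu,\partial_n\bu,\partial_{kl}\bu\}$ (guaranteed on $\{x_n=0\}$ by the rank hypothesis and near it by continuity), apply Cauchy--Kovalevskaya, and invoke Proposition~\ref{equivalence} to return to the first-order system. The only cosmetic point is that $M$ and $b$ also depend on the tangential second derivatives $\partial_{kl}\bu$ and $\partial_{kn}\bu$, not just on $(x,\bu,\partial\bu)$, which is still admissible in the Cauchy--Kovalevskaya normal form.
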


As remarked earlier, our goal in Section \ref{RamifiedEmbedSection} will be to solve the Cauchy problem for the system (\ref{I.1}), (\ref{I.2}), (\ref{I.3}) in the singular setting in which $g_{nn}$ has a zero of a certain type at the origin. Since our proof will rely on Choquet-Bruhat's version~\cite{CB} for non-linear systems of Leray's extension of the Cauchy-Kovalevskaya Theorem to the case in which the Cauchy data admit (non-exceptional) characteristic points \cite{L},,\cite{GKL}, we shall from now on assume that the functions $g_{nn}$ and $g_{kl}\,,1\leq k,l\leq n-1$ are analytic in $U$. The precise hypotheses on the behaviour of $g_{nn}$ are the ones given in Definition 1.1.
\section{Singular metrics} \label{SingularSection}

Our goal for this section of our paper is to introduce the class of singular metrics for which we shall prove the existence of ramified local isometric embeddings in Section \ref{RamifiedEmbedSection}. 

We begin by showing that the normal form (\ref{metric}), in which the components $g_{nj}$ of the metric tensor are identically zero, can be achieved by a suitable analytic local diffeomorphism for a class singular metrics which include the singular metrics covered by Definition~1.1 as a special case.  More precisely, we prove the following result:

\begin{prop}\label{P.bs}
Consider on a domain $U\subset\R^n$ a singular analytic Riemannian metric 
\begin{equation}\label{generalmetric}
g=g_{\alpha \beta}(x)\,dx_{\alpha}\,dx_{\beta}=g_{nn}(x)\,dx_n^{2}+2b_{j}(x)\,dx_j\,dx_n+g_{jk}(x)\,dx_j\,dx_k\,,
\end{equation}
where by singular we mean that $g_{nn}$ has an isolated zero at the origin $0\in U$ and $\det(g_{kl})\neq 0$ in $U$. Then there exists an analytic local diffeomorphism $f$ of the form 
\begin{equation}\label{diffeo}
x_n={\bar x}_n\,,\quad x_{j}={\bar x}_{j}+f_{j}({\bar x})\,,
\end{equation}
such that the components ${\bar b}_{j}$ of the transformed metric ${\bar g}:=f^{*}g$ are identically zero, that is $\bar g$ takes the form 
\begin{equation}\label{normalmetric}
{\bar g}={\bar g}_{nn}({\bar x})\,d{\bar x}_{n}^{2}+{\bar g}_{kl}(\bar x)\,d{\bar x}_{k}\,d{\bar x}_{l}\,.
\end{equation}
\end{prop}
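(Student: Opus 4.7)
The plan is to straighten out the foliation by the hypersurfaces $\{x_n=\text{const}\}$ by flowing along an analytic vector field that is $g$-orthogonal to this foliation. Since the proposed diffeomorphism \eqref{diffeo} preserves the $x_n$-coordinate, it preserves the foliation pointwise, so the only remaining task is to transport the transverse coordinates $\bar x'$ along a suitably chosen transverse direction.

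The first step is to construct this vector field explicitly. Since $\det(g_{jk})\neq 0$ throughout $U$ by hypothesis, the $(n-1)\times(n-1)$ spatial block admits an analytic inverse $(g^{jk})$, and I set
$$X := \partial_n - g^{jk}(x)\, b_k(x)\, \partial_j.$$
A one-line computation yields $g(X,\partial_l) = b_l - g^{jk}g_{jl}b_k = 0$ for every $l=1,\ldots,n-1$, so $X$ is $g$-orthogonal to every slice $\{x_n=\text{const}\}$ while remaining transverse to them. The crucial observation is that this construction never divides by $g_{nn}$, so the assumption that $g_{nn}$ has an isolated zero at the origin is harmless here.

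The second step is to integrate $X$ starting from $\{x_n=0\}$. The analytic ODE system
$$\frac{dx_j}{dt}(t) = -g^{jk}\bigl(x(t),t\bigr)\, b_k\bigl(x(t),t\bigr), \qquad x_j(0)=\bar x_j, \qquad 1\leq j\leq n-1,$$
admits by the analytic ODE theorem an analytic solution $x = x(t,\bar x')$ in a neighborhood of the origin. Setting $t = \bar x_n$ and defining
$$\phi(\bar x',\bar x_n) := \bigl(x(\bar x_n,\bar x'),\bar x_n\bigr),$$
the map $\phi$ has exactly the form prescribed in \eqref{diffeo} with $f_j(\bar x) = x_j(\bar x_n,\bar x') - \bar x_j$, which vanishes on $\{\bar x_n=0\}$. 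Its Jacobian at the origin is upper triangular with ones on the diagonal, so $\phi$ is an analytic local diffeomorphism near $0$. To finish, I check $\bar g_{jn}=0$: by construction $\phi_*\partial_{\bar x_n} = X\circ\phi$, while $\phi_*\partial_{\bar x_j}$ lies in the span of $\partial_1,\ldots,\partial_{n-1}$ because $\phi$ preserves $x_n$, so the orthogonality $g(X,\partial_l)=0$ from the first step gives the result.

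I do not expect a genuine obstacle in this argument. The singularity of $g_{nn}$ is precisely the kind of degeneracy that the above construction tolerates, because the transverse direction is recovered from the non-degenerate spatial block $(g_{jk})$ rather than from $g_{nn}$ itself; this also makes transparent why the proposition applies to the broader class of metrics mentioned in the text, not just those of Definition~\ref{D.admissible}. The only mild point of care is arranging that $\phi$ have the specific form \eqref{diffeo} with $\bar x_n = x_n$, but this is built in by parametrizing the flow by $\bar x_n$ and imposing the initial condition at $\bar x_n=0$.
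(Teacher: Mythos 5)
Your argument is correct, and it takes a genuinely different route from the paper's. The paper works directly with the first-order PDE system \eqref{killb} that the functions $f_j$ must satisfy for the cross terms to vanish: it first proves $b_j(0)=0$ from positive-definiteness, then seeds the problem with linear initial data $\tilde f_j$ built from a small invertible matrix $A$ precisely so that the matrix $M_{lk}(\partial' f)=b_k+\partial_l f_j\, g_{jk}$ becomes invertible near the origin, which allows the system to be put in the Cauchy--Kovalevskaya normal form \eqref{killbnorm} and solved. You instead observe that the analytic vector field $X=\partial_n-g^{jk}b_k\,\partial_j$ is $g$-orthogonal to the slices $\{x_n=\mathrm{const}\}$ and transverse to them, and that its flow, parametrized by $\bar x_n$ and started on $\{x_n=0\}$, produces a diffeomorphism of the form \eqref{diffeo} for which $\bar g_{jn}=g(\Phi_*\partial_{\bar x_j},X)=0$ automatically. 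This reduces the problem to an analytic ODE rather than a PDE, dispenses entirely with the preliminary lemma $b_j(0)=0$ and with the auxiliary matrix $A$ (your $f_j$ simply vanish on $\{\bar x_n=0\}$, making the Jacobian condition immediate), and isolates the reason the degeneracy of $g_{nn}$ is harmless: only the inverse of the nondegenerate block $(g_{jk})$ is ever used. The computations you cite are all correct ($g(X,\partial_l)=b_l-g^{jk}b_kg_{jl}=0$, and $\Phi_*\partial_{\bar x_n}=X\circ\Phi$ is the defining property of the flow), and analyticity of the solution in $(\bar x_n,\bar x')$ follows from the analytic Cauchy theorem for ODEs. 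The one thing your write-up leaves implicit, and which is worth a sentence if this were to replace the paper's proof, is that the resulting $\bar g_{nn}=g(X,X)=g_{nn}-g^{jk}b_jb_k$ and $\bar g_{kl}$ are again analytic with $\det(\bar g_{kl})\neq0$, so that the normal form \eqref{normalmetric} is of the type needed downstream; this is immediate since $\Phi$ preserves the slices.
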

\begin{proof}
The first step in the proof is to show that the coefficients $b_j$ in (\ref{generalmetric}) must vanish at the origin as a consequence of our hypothesis that $g_{nn}$ has an isolated zero at the origin. Let 
\[
V=V'\partial_{x'}+V_{n}\partial_n\,,
\]
denote a non-zero tangent vector at a point $x\in U$. We have, at the point $x$,
\[
0<g(V,V)=\norm{V'}_{g'}^{2}+2 V_nV'\cdot b+g_{nn}(V_{n})^{2}\,,
\]
where $g':=(g_{kl})$ is the $(n-1)\times (n-1)$ sub-matrix of $g$ corresponding to the range $1\leq k,l \leq n-1$ and the dot stands for Euclidean scalar product. Defining $\frak {b}\in \mathbb{R}^{n-1}$ by $b=:g'\frak{b}$, the above inequality then reads
\[
0<g(V,V)= \norm{V'}_{g'}^{2}+2 V_n\, g'( V',\frak{b})+g_{nn}(V_{n})^{2}\,.
\]
The worst possible case for this inequality occurs when $V'=\lambda\, \frak{b}$,where $\lambda \in \mathbb{R}$, in which case the condition $0<g(V,V)$ reduces to 
\[ 
0<\lambda^{2}\norm{\frak{b}}_{g'}^{2}+2\lambda V_{n}\norm{\frak{b}}_{g'}^{2}+g_{nn}(V_{n})^{2}\,.
\]
This inequality will hold if an only if 
\[
\norm{\frak{b}}_{g'}^{2}g_{nn}(V_{n})^{2}-\|\frak{b}\|_{g'}^{4}(V_{n})^{2}>0\,,
\]
for all $V_{n}\neq 0$, or equivalently 
\[
\norm{\frak{b}}_{g'}^{2}<g_{nn}\,,
\]
which establishes the first step.

Next we apply a local diffeomorphism of the form (\ref{diffeo}) to the metric (\ref{generalmetric}) and determine the conditions that the functions $f_j({\bar x})$ must satisfy in order for transformed metric $f^{*}g$ to take the form (\ref{normalmetric}) in which the coefficients ${\bar b}_j$ of the cross terms in the metric 
\[
f^{*}g={\bar g}_{\alpha \beta}({\bar x})\,d{\bar x}_{\alpha}\,d{\bar x}_{\beta}={\bar g}_{nn}({\bar x})\,d{\bar x}_n^{2}+2{\bar b}_{j}({\bar x})\,d{\bar x}_j\,d{\bar x}_n+{\bar g}_{jk}(x)\,d{\bar x}_j\,d{\bar x}_k\,,
\]
are identically zero. A straightforward calculation gives that ${\bar b}_{j}=0$ if and only if 
\begin{equation}\label{killb}
b_{k}\partial_{n}f_{k}+g_{jk}\partial_{l}f_{j}\partial_{n}f_{k}=-b_{l}\,,
\end{equation}
where all the partial derivatives are taken with respect to the barred coordinates $({\bar x}',{\bar x}_n)$. We now choose an invertible matrix $A=(A_{jk})$ of small norm, say $\norm{A}<\varepsilon$, and define $n-1$ linear functions ${\tilde f}_{j}(\barx)$ by
\[
{\tilde f}_{j}(\barx)=(g^{-1})_{jm}A_{lm}\barx_{l}\/.
\]
Define next $G(\partial' f):=M^{-1}(\partial' f)$, where
\[
M_{lk}(\partial'f):=b_{k}+\partial_{l}f_{j}g_{jk}\,.
\]
We claim that the matrix-valued function $G(\partial' f)$ is well defined for $f_{j}$ in a $C^{1}$-small neighbourhood of ${\tilde f}_{j}$ and $x$ in a small neighbourhood of the origin. Indeed, by definition of $\tilde f$ and using the fact that $b_{j}(0)=0$, we have 
\[
M_{lk}(\partial'f)(\barx)=O(\norm{\bar x})+(g^{-1})_{jm}A_{lm}g_{jk}+O(\norm{f-{\tilde f}}_{C^{1}})=A_{lk}+O(\norm{\bar x}+\norm{f-{\tilde f}}_{C^{1}})\,.
\]
Therefore we can write the system of equations (\ref{killb}) to be solved as 
\begin{equation}\label{killbnorm}
\partial_{n}f_{k}=-G_{kl}(\partial'f)b_{l}\,,
\end{equation}
and take as initial data 
\begin{equation}\label{killbinit}
f_{k}|_{{\barx}_{n}=0}={\tilde f}_{k}=(g^{-1})_{km}A_{lm}{\barx}_{l}\,.
\end{equation}
We now apply the Cauchy-Kovalevskaya Theorem to (\ref{killbnorm}) with initial data given by (\ref{killbinit}). By choosing $\varepsilon$ small enough, we can ensure that the map (\ref{diffeo}) obtained by solving the system (\ref{killbnorm}) is a local diffeomorphism, which proves our claim.
\end{proof}
Now that the normal form 
\begin{equation}\label{singmetric}
g=g_{nn}(x)\,dx_{n}^{2}+g_{kl}(x)\,dx_{k}\,dx_{l}\,.
\end{equation}
has been established for singular metrics such that $g_{nn}$ has an isolated zero at the origin $0\in U$, we need to make further assumptions about the leading order behaviour of $g_{nn}$ and of the normal derivative of $g_{ij}$ at the origin in order to be able to apply the results of Leray's ramified Cauchy-Kovalevskaya theorem (which are recalled in Appendix \ref{LerayTheorem}). Specifically, we require that the origin be an isolated zero of $g_{nn}$, and that both the partial Hessian $(\partial_{jk}g_{nn}(0))_{1\leq j,k\leq n-1}$ and the matrix $(g_{jk}(0)) _{1\leq j,k\leq n-1}$ be positive definite. 
Transforming the partial Hessian at~0 into the identity matrix with a local change of coordinates and employing the division property of analytic functions, this leads to the definition of admissible singularities presented in the Introduction as Definition~1.1.
This definition is the starting point of the analysis of the ramified local isometric embedding problem which we shall work out in the next section.

\section{Ramified local isometric embeddings}\label{RamifiedEmbedSection}

In analogy with the construction of Cauchy data for the iterated sequence of Cauchy-Kovalevskaya problems that come up in the proof of the classical Cartan-Janet Theorem (see Proposition \ref{iteration}), our next goal for this section is to show that there exist analytic Cauchy data $\bu_{0},\, \bu_{1}$ for the local isometric embedding problem of the class of singular metrics with an admissible singularity at the origin, which satisfy the constraints (\ref{7}) to (\ref{10}) and which are such that the Cauchy problem for the system (\ref{II.1}) to (\ref{II.3}) admits an isolated \emph{non-exceptional characteristic point} in the sense of Definition \ref{nondeg} at the origin $0\in U$. This turns out to be significantly more delicate than the construction that appears in the proof of the classical Cartan-Janet Theorem. 

We begin by noting that letting 
\[
F(x'):=F_{0}(x',0),\quad{\bar g}_{ij}(x'):=g_{ij}(x',0),\quad h_{ij}(x'):=-\frac{1}{2}\partial_{n}g_{ij}(x',0)\,,
\]
the constraints (\ref{7}) to (\ref{10}) read
\begin{align}
\|\bu_{1}\|^{2}&=\norm{x'}^{2}F\,.\label{C1}\\
\partial_{i}\bu_{0}\cdot\bu_{1}&=0\,,\label{C2}\\
\partial_{i}\bu_{0}\cdot\partial_{j}\bu_{0}&={\bar g}_{ij}\,,\label{C3}\\
\partial_{ij}\bu_{0}\cdot\bu_{1}&=h_{ij}\,,\label{C4}
\end{align}
and the assumption that the origin should be an isolated characteristic point for the system of PDEs (\ref{II.1}),(\ref{II.2}), (\ref{II.3}) implies that the vectors $\partial_{i}\bu_{0}, \bu_{1}, \partial_{ij}\bu_{0}$ should be linearly independent at the image of every point $x'\neq 0$ in the domain of definition of $\bu_{0}$ and $\bu_{1}$.

Recall first that for any analytic (non-degenerate) metric ${\hat g}_{ij}(x')$ on the hypersurface $x_n=0$, we know from the Cartan-Janet Theorem that there exits a local isometric embedding from $\Sigma \subset \{x_{n}=0\} \to \mathbb{E}^{N'}$, where $N'=n(n-1)/2$, meaning that there exists a local isometric embedding $\bv$ which satisfies 
\begin{equation}\label{ghat}
\partial_{i}\bv\cdot\partial_{j}\bv={\hat g}_{ij}\,,
\end{equation}
where we may assume with no loss of generality that $\partial_{a}\bv,\partial_{n-1}\bv,\partial_{ab}\bv,\,1\leq a,b \leq n-2$, are linearly independent at every $x'\in \Sigma$. Consider now the embedding ${\bw}: \Sigma \to \mathbb{E}^{N-1}= \mathbb{E}^{N'}\times  \mathbb{E}^{n-1}$ defined by
\[
\bw:=(\bv,\bV)\,,
\]
with $\bV=(V_{1},\dots,V_{n-1})$ defined by 
\[
V_{a}:=\epsilon^{5}\sin\frac{x_{n-1}}{\epsilon^{4}}\,\sin\frac{x_{a}}{\epsilon^{2}}\,,\quad V_{n-1}:=-\epsilon^{5}\cos\frac{x_{n-1}}{\epsilon^{4}}\,,
\]
where $1\leq a \leq n-2$. It is then straightforward to verify that 
\begin{align*}
\partial_{a}\bw &= \partial_{a}\bv +O(\epsilon^{3})\,,\\
\partial_{n-1}\bw &= \partial_{n-1}\bv +O(\epsilon)\,\\
\partial_{ab}\bw &= \partial_{ab}\bv +O(\epsilon)\,\\
\partial_{n-1,a}\bw &= \partial_{n-1,a}\bv +\frac{1}{\epsilon}\cos\frac{x_{n-1}}{\epsilon^{4}}\,\cos\frac{x_{a}}{\epsilon^{2}}{\bf E}_{a}\,\\
\partial_{n-1,n-1}\bw &= \partial_{n-1,n-1}\bv +\frac{1}{\epsilon^{3}}\bigg(\cos\frac{x_{n-1}}{\epsilon^{4}}{\bf E}_{n-1}-\sin\frac{x_{n-1}}{\epsilon^{4}}\sum_{a=1}^{n-2}\sin\frac{x_a}{\epsilon^{2}}\,{\bf E}_{a}\bigg)\,,
\end{align*}
where ${\bf{E}}_i$ are unit vectors in the $x_i$ direction in $\mathbb{E}^{n}$.  So if we take $\epsilon$ very small and choose $x'$ with $\norm{x'}<\epsilon^{5}$, we may conclude that $\partial_{j}\bw$ and $\partial_{jk}\bw$ are linearly independent at every point of their domain of definition in $\mathbb{R}^{N-1}$. We also have 
\[
\partial_{i}\bw\cdot\partial_{j}\bw=\partial_{i}\bv\cdot\partial_{j}\bv+\partial_{i}\bV\cdot\partial_{j}\bV\,,
\]
where 
\begin{equation}\label{estimate}
|\partial_{i}\bV\cdot \partial_{j}\bV|<C\epsilon^{2}\,,
\end{equation}
for some positive constant $C$. 

If we set now
\[
{\hat g}_{ij}:={\bar g}_{ij}-\partial_{i}\bV\cdot\partial_{j}\bV\,,
\]
then this metric is now positive-definite the origin as a consequence of the estimate (\ref{estimate}), so we may apply the Cartan-Janet Theorem as we did above to ${\hat g}_{ij}$ to conclude using (\ref{ghat}) that the map $\bw$ satisfies
\[
\partial_{i}\bw\cdot\partial_{j}\bw={\bar{g}}_{ij}\,.
\] 
Next we consider the embedding $\bu_{0}:\Sigma \to
\mathbb{E}^{N-1}\times \mathbb{E}^{n-1}=\mathbb{E}^{N+n-2}$ given by
\begin{equation}\label{defu0}
\bu_{0}:=(\bw,0)\,.
\end{equation}
The tangent space $T_{\bu_{0}(x')}{\bar{\Sigma}}$ to ${\bar \Sigma}=\bu_{0}(\Sigma)\subset \mathbb{E}^{N+n-2}$ at $\bu_{0}(x')$ is the $(n-1)$-dimensional subspace given by the linear span of the vectors $\partial_{j}\bw,\,1\leq j \leq n-1$, that is 
\begin{equation}\label{tangent}
 T_{\bu_{0}(x')}{\bar{\Sigma}}=\langle\partial_{j}\bw : 1\leq j \leq n-1\rangle\,.
\end{equation}
We also have
\[
\langle\partial_{j}\bw,\partial_{jk}\bw : 1\leq j,k \leq
n-1\rangle=\langle\partial_{j}\bw,\bN_{r} : 1\leq j \leq n-1,\,1\leq r \leq n(n-1)/2\rangle\,,
\]
where $\{\bN_{r},\, 1\leq r \leq n(n-1)/2\}$ is a linearly independent of unit normal vectors in $\mathbb{E}^{N-1}\subset \mathbb{E}^{N+n-2}$. 

Denoting by $\{\be_{j} : 1\leq j\leq n-1\}$ an orthonormal basis of
the $\mathbb{E}^{n-1}$ factor in
$\mathbb{E}^{N+n-2}=\mathbb{E}^{N-1}\times \mathbb{E}^{n-1}$, the
normal space $N_{\bu_{0}(x')}{\bar{\Sigma}}$ to ${\bar
  \Sigma}=\bu_{0}(\Sigma)\subset \mathbb{E}^{N+n-2}$ at $\bu_{0}(x')$
is, in view of the linear independence of set of vectors
$\{\partial_{j}\bw, \partial_{ab}\bw : 1\leq j \leq n-1,\,1\leq a,b \leq n-2\}$ (which follows from Proposition \ref{iteration}), given by the $(N-1)$-dimensional subspace  
\begin{equation}\label{normal}
N_{\bu_{0}(x')}{\bar{\Sigma}}=\langle\bN_{r},\be_{j} : 1\leq r \leq n(n-1)/2, 1\leq j\leq n-1 \rangle\,.
\end{equation}
Consequently, there exists a unique vector field ${\bf N}$ along ${\bar \Sigma}$ of the form 
\[
{\bf N}=\sum_{r=1}^{n(n-1)/2}\alpha_{r}{\bf N}_{r}\,,
\]
such that 
\[
{\bf N}\cdot\partial_{jk}\bw=h_{ij}\,.
\]
Note that the hypothesis (\ref{normderiv}) on $g$ can be rewritten as $h_{ij}=O(\|x'\|^2)$, so we immediately infer that
\begin{equation}\label{normN}
\norm{{\bf N}}=O(\norm{x}^{2})\,.
\end{equation}
We now set
\begin{equation}\label{defu1}
\bu_{1}:={\bf N}+\sum_{j=1}^{N}x_{j}\,G{\be}_{j}\,,
\end{equation}
where 
\begin{equation}\label{defG}
G:=\bigg(F-\frac{\norm{{\bf N}}^{2}}{\norm{x'}^{2}}\bigg)^{1/2}
\end{equation}
is a real-valued analytic function near $x'=0$ by the bound~\eqref{normN}.
It follows from (\ref{tangent}), (\ref{normal}), (\ref{defu1}) and
(\ref{defG}) that the initial data $\bu_{0}, \bu_{1}$ defined by
(\ref{defu0}), (\ref{defu1}) satisfy the constraints (\ref{C1}),
(\ref{C2}), (\ref{C3}) and (\ref{C4}).

Furthermore, by construction,
the function $\Delta:V\to\R$ defined as
\begin{equation}\label{Delta}
\Delta(x'):=\det (\partial_j \bu_0(x'),\bu_1(x'),\partial_{jk}\bu_0(x'),
\be_a)_{1\leq j,k\leq n-1,\; 2\leq a\leq n-1}
\end{equation}
has a nondegenerate zero at 0. More precisely, it follows from the previous construction that, in a neighborhood of the origin, the function~$\Delta$ is of the form
\[
\Delta(x')=x_1\, \Delta_0(x')
\]
with $\Delta_0(0)\neq0$. To see this, note that the linear independence of the above vectors implies that
\begin{align*}
\Delta(x')&=\det (\partial_j \bu_0(x'),\bu_1(x'),\partial_{jk}\bu_0(x'),
            \be_a)_{1\leq j,k\leq n-1,\; 2\leq a\leq n-1}\\
  &= \det \bigg(\partial_j \bu_0(x'), {\bf N}(x')+\sum_{l=1}^{N}x_{l}\,G(x')\,{\be}_{l},\partial_{jk}\bu_0(x'),
    \be_a\bigg)_{1\leq j,k\leq n-1,\; 2\leq a\leq n-1}\\
  &=\det \big(\partial_j \bu_0(x'),x_1\,G(x')\, {\be}_{1},\partial_{jk}\bu_0(x'),
    \be_a\big)_{1\leq j,k\leq n-1,\; 2\leq a\leq n-1}\\
  &=x_1\,G(x')\, \det \big(\partial_j \bu_0(x'), {\be}_{1},\partial_{jk}\bu_0(x'),
    \be_a\big)_{1\leq j,k\leq n-1,\; 2\leq a\leq n-1}\\
  &=: x_1\, \Delta_0(x')\,,
\end{align*}
where indeed $\Delta_0(0) \neq0$. We have thus proved:

\begin{prop}\label{P.initial}
Consider an analytic metric $g$ on a neighbourhood $U \subset
\mathbb{R}^{n}$ admitting an admissible singularity at the origin,
that is a metric of the form (\ref{singmetricexplicit}) satisfying
(\ref{normderiv}). Then there exist analytic initial data $\bu_{0},
\bu_{1}$ for the system (\ref{II.1}) to (\ref{II.3}), taking values in
$\mathbb{E}^{N+n-2}$, which satisfy the constraints (\ref{C1}) to
(\ref{C4}) and which are such that $\partial_{i}\bu_{0}, \bu_{1},
\partial_{ij}\bu_{0}$ are linearly independent on the complement of
the origin in the initial hypersurface $x_{n}=0$, The
function~$\Delta(x')$ defined in~\eqref{Delta} satisfies $\partial_1\Delta(0)\neq0$.
\end{prop}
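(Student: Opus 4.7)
The plan is to construct $\bu_0$ and $\bu_1$ layer by layer, working inside the enlarged ambient space $\mathbb{E}^{N+n-2}=\mathbb{E}^{N-1}\times\mathbb{E}^{n-1}$. The first factor will host an isometric embedding of the hypersurface metric enriched with enough normal directions to encode the second-order data from (\ref{C4}); the extra $\mathbb{E}^{n-1}$ factor will absorb $\bu_1$, which must be normal to $\bu_0(\Sigma)$ by (\ref{C2}) while carrying the prescribed norm $\|x'\|^2F$ from (\ref{C1}).

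For $\bu_0$, I would first apply the classical Cartan--Janet Theorem to the induced metric $\bar g_{ij}$ on $\{x_n=0\}$, obtaining an embedding $\bv:\Sigma\to\mathbb{E}^{N'}$ with $N'=n(n-1)/2$. The family $\{\partial_i\bv,\partial_{ij}\bv\}$ need not be linearly independent, so I would perturb by an oscillatory correction $\bV:\Sigma\to\mathbb{E}^{n-1}$, depending on a small parameter $\epsilon$, chosen so that $|\partial_i\bV\cdot\partial_j\bV|=O(\epsilon^2)$ while the higher-frequency derivatives of $\bV$ are large enough to force independence of all desired derivatives. Setting $\bw=(\bv,\bV)$ and \emph{re-applying} Cartan--Janet to the corrected hypersurface metric $\hat g_{ij}:=\bar g_{ij}-\partial_i\bV\cdot\partial_j\bV$ (still positive definite for $\epsilon$ small) produces a $\bv$ such that $\partial_i\bw\cdot\partial_j\bw=\bar g_{ij}$, verifying (\ref{C3}). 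I then take $\bu_0:=(\bw,0)$.

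The form of $\bu_1$ is forced by (\ref{C2}) and (\ref{C4}). The $\mathbb{E}^{N-1}$-component of $\bu_1$ must be normal to $\bu_0(\Sigma)$, and the constraint (\ref{C4}) then determines it uniquely as a linear combination ${\bf N}=\sum_r\alpha_r{\bf N}_r$ of a basis of normal directions in $\mathbb{E}^{N-1}$, obtained by solving a linear system in the $h_{ij}$. Here the admissibility hypothesis~(\ref{normderiv})---which says $h_{ij}=O(\|x'\|^2)$---is critical: it yields $\|{\bf N}\|=O(\|x'\|^2)$. With this in hand, I would set
\[
\bu_1:={\bf N}+\sum_{j=1}^{n-1}x_j\,G(x')\,\be_j,\qquad G(x'):=\Bigl(F-\tfrac{\|{\bf N}\|^2}{\|x'\|^2}\Bigr)^{1/2},
\]
where $\{\be_j\}$ is an orthonormal basis of the $\mathbb{E}^{n-1}$ factor. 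The $O(\|x'\|^2)$ estimate for ${\bf N}$ makes the radicand real-analytic and strictly positive near $0$, so $G$ is analytic. Then $\|\bu_1\|^2=\|x'\|^2F$ giving (\ref{C1}), and $\bu_1\perp\partial_i\bu_0$ gives (\ref{C2}).

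The remaining rank and non-degeneracy assertions are where the real work lies. Linear independence of $\{\partial_i\bu_0,\bu_1,\partial_{ij}\bu_0\}$ away from the origin is straightforward from the oscillatory construction of $\bw$ together with the fact that the $\be_j$-component $x_jG$ of $\bu_1$ does not vanish as soon as $x'\ne0$. The most delicate point, which I expect to be the main technical obstacle, is the non-degenerate vanishing of the determinant $\Delta(x')$ at $0$: the idea is to exploit the block structure. Since ${\bf N}$ already lies in the span of the normal directions generated by the $\partial_{jk}\bw$, column operations remove the ${\bf N}$ contribution from the $\bu_1$-column; the $\be_a$-columns with $2\le a\le n-1$ then strip out the corresponding $\be_a$-components of $\bu_1$, leaving only $x_1G(x')\be_1$. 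By multilinearity,
\[
\Delta(x')=x_1\,G(x')\,\det\bigl(\partial_j\bw,\be_1,\partial_{jk}\bw,\be_a\bigr)_{1\le j,k\le n-1,\;2\le a\le n-1},
\]
and the final determinant is nonzero at $0$ precisely because of the independence arranged in the construction of $\bw$. This gives $\Delta(x')=x_1\Delta_0(x')$ with $\Delta_0(0)\ne0$, whence $\partial_1\Delta(0)\ne0$.
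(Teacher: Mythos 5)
Your construction coincides with the paper's own proof in every essential respect: the same two-pass application of Cartan--Janet to $\bar g_{ij}$ and then to $\hat g_{ij}=\bar g_{ij}-\partial_i\bV\cdot\partial_j\bV$, the same oscillatory perturbation $\bV$ forcing linear independence of first and second derivatives, the same normal field ${\bf N}$ determined by (\ref{C4}) with $\|{\bf N}\|=O(\|x'\|^2)$ from (\ref{normderiv}) making $G$ analytic, and the same column-operation factorization $\Delta=x_1\,G\,\det(\partial_j\bw,\be_1,\partial_{jk}\bw,\be_a)$. (Your sum $\sum_{j=1}^{n-1}x_j G\,\be_j$ in the definition of $\bu_1$ is in fact the correct range, needed for $\|\bu_1\|^2=\|x'\|^2F$; the paper's upper limit $N$ there is a typo.)
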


Proposition~\ref{P.initial} should be thought of as the key technical result of
the paper. Now, armed with this result, it is now easy to establish our main result, that is Theorem~\ref{T.main}: 

\begin{proof}[Proof of Theorem~\ref{T.main}]
The hypothesis that $\det g$ has a simple zero at the origin ensures
that the tensor~$g$ has only one null direction at~0, so it can be
written in the form~\eqref{generalmetric}. Proposition~\ref{P.bs}
ensures we can get rid of the crossed terms (i.e., those in $dx_j\,
dx_n$) by means of a local change of coordinates. The embedding
problem is then reduced to studying the system of
PDEs~\eqref{II.1}--\eqref{II.3} for a metric~$g$ that is admissible in
the sense of Definition~1.1. Since the system is
underdetermined in that there are fewer equations ($N=n(n+1)/2$) than
unknowns ($N+n-2= (n^2+3n-4)/2$), let us
augment the system by imposing that
\begin{equation}\label{II.last}
\be_a\cdot \partial_{nn}\bu=0\,,\qquad 2\leq a\leq n-1
\end{equation}
where the orthonormal vectors $\{\be_a\}_{a=2}^{n-1}$ are
defined as before.

To construct a solution to the augmented system of PDEs
\eqref{II.1}-\eqref{II.2}-\eqref{II.3}-\eqref{II.last}, we employ
Leray's Cauchy-Kovalevskaya theorem in the form given by Choquet-Bruhat for non-linear systems (see Theorem \ref{T.CB}). Using the notation we introduce there,
consider the Cauchy surface $S:=\{x\in\R^n:x_n=0\}$, corresponding to
the function $s(x):=x_n$. It is not hard to check,
that the function $\mathcal{A}(x,p)$ defined in
the Appendix is an $(N+n-2)\times (N+n-2)$ matrix of the form
\[
  \mathcal{A}(x,p)=p_n^3 \left(\partial_j\bu(x),
                         \partial_n\bu(x),
                         \partial_{jk}\bu(x),
                         \be_a 
               \right)_{1\leq j,k\leq n-1,\; 2\leq a\leq n-1} + \sum_{\alpha} p^\alpha \mathcal{M}_\alpha(x)\,,
             \]
where the sum ranges over the set of multiindices with
$|\alpha|=3$ such that the monomial $p^\alpha$ is different from
$p_n^3$ and $\mathcal{M}_\alpha(x)$ are matrices whose concrete
expressions will not be needed.

With the Cauchy data
\begin{equation}\label{Cauchu}
\bu|_{x_n=0}=\bu_0\,,\qquad \partial_n\bu|_{x_n=0}=\bu_1\,,
\end{equation}
and using the fact that the
gradient of $s(x)=x_n$ points in the $n$-th direction,
we immediately obtain from the previous formula that, on~$S$, the function
$\mathcal{A}_*$ defined in the Appendix is precisely
\[
  \mathcal{A}_*(x')=\Delta(x')\,,
                          \]
                          where the function~$\Delta$ was introduced
                          in~\eqref{Delta}.  In the case where the
                          metric is not singular, this formula is of
                          course very well-known.

                          Proposition~\ref{P.initial}
                          ensures that $\partial_1\Delta(0)\neq0$, so there is a direction tangent to
                          the hyperplane~$S$ such that the
                          corresponding directional derivative
                          of~$\Delta$ at~0 does not vanish. By
                          Remark~\ref{R.nonex}, the origin is then a
                          non-exceptional characteristic point of the system.
                          Choquet-Bruhat's
                          nonlinear extension of Leray's theorem,
                          stated in Theorem~\eqref{T.CB} then shows that,
                          in a small deleted neighborhood of~$0$, the
                          system
                          \eqref{II.1}-\eqref{II.2}-\eqref{II.3}-\eqref{II.last}
                          admits a unique ramified solution with the
                          initial data~\eqref{Cauchu}, and that the
                          singularities of~$\bu$ in a neighborhood
                          of~0 are algebroid. This implies that there is
                          a finite Riemannian cover $U'$
                          of~$U\backslash\{0\}$ as in the statement of
                          the theorem such that $\bu$ defines an
                          embedding $U'\to\mathbb{E}^{N+n-2}$.
\end{proof}

\appendix

\section{Leray's ramified Cauchy-Kovalevskaya Theorem}\label{LerayTheorem}
Our purpose in this appendix is to summarize the essentials of Leray's theorem on the ramification and uniformization of the solution of the Cauchy problem for analytic differential systems near a characteristic point, \cite{L},\cite{ GKL}. 

In order to keep the ideas transparent and so as to avoid unnecessary notational complexities, we shall focus primarily on the case of scalar linear differential operators treated originally by Leray, following the lucid exposition given by Garding in \cite{G}. The proof of Leray's theorem (which we shall refer to as the uniformization theorem for short) is presented very clearly in \cite{G} and will thus be omitted from this appendix for reasons of space. The extension of the uniformization theorem to the case of non-linear analytic systems, which is the result we are actually using in connection with the local isometric embedding problem for metrics with an admissible singularity at the origin, was established by Choquet-Bruhat in \cite{CB}, a few years after the publication of the foundational papers of Leray and his collaborators \cite{L}, \cite{GKL}. We should remark at this stage that the statement of Choquet-Bruhat's uniformization theorem and its proof follow, with a number of non-trivial modifications, the main steps appearing already in \cite{L} and \cite{GKL} for the linear case. These modifications notably entail a significant increase in notational complexity due to the differentiation of the original system (an important step which is required to put the system in an equivalent quasi-linear form), and the introduction thereby of higher-order derivatives. Therefore, instead of presenting the statement of Choquet-Bruhat's theorem in full detail after having reviewed Leray's theorem for scalar linear differential operators, we shall focus on those features which are sufficient for the application we make in our paper to the local isometric embedding problem for metrics with an admissible singularity at the origin, keeping in mind that the linear case already contains the key ingredients, ideas and techniques that are at the basis of the uniformization theorem for non-linear systems.

We thus begin with the scalar linear case, and consider on an subset $V$ of $\mathbb{R}^{l}$ with local coordinates $x=(x_{1}\,\ldots,x_{l})$ an $m$-th order linear differential operator 
\begin{equation}\label{DefA}
A=a(x,\partial_x)=\sum_{|\alpha|\leq m}a_{\alpha}(x)\partial_{x}^{\alpha}\,,
\end{equation}
with analytic coefficients, where we use the standard multi-index notation for derivatives. We shall be interested in the Cauchy problem for the PDE
\begin{equation}\label{scalarop}
a(x,\partial_{x})u(x)=v(x)\,,
\end{equation}
where $v$ is analytic in $V$. The Cauchy problem will consist in prescribing the values of an analytic function $w$ and its derivatives of order $0\leq k\leq m-1$ on an analytic hypersurface $S\subset V$ given as the zero set $s(x)=0$ of an analytic function $s$, 
\begin{equation}\label{scalardata}
u(x)-w(x)=O(s(x)^{m})\,.
\end{equation}
The principal part $G$ of $A$, defined by 
\begin{equation}\label{principal}
G=g(x,\partial_x):=\sum_{|\alpha| = m}a_{\alpha}(x)\partial_{x}^{\alpha}\,,
\end{equation}
will play an important role in the analysis of the ramified Cauchy problem. It defines on the cotangent bundle $T^{*}V$ an analytic real-valued function $g$ given in standard bundle coordinates $(x,p)=(x_{1},\ldots,x_{l},p_{1},\ldots,p_{l})$ by 
\begin{equation}\label{Hamiltonian}
g(x,p)=\sum_{|\alpha| = m}a_{\alpha}(x)p^{\alpha}\,.
\end{equation}
The function $g(x,p)$ is thus homogeneous of degree $m$ in the fiber coordinates $p=(p_{1},\ldots,p_{l})$, and is invariant under local diffeomorphisms of $T^{*}V$ which are lifts of local diffeomorphisms of V.

It is of course well known that the classical Cauchy-Kovalevskaya Theorem (see for example \cite{T}) guarantees the existence of a unique analytic solution to the Cauchy problem (\ref{scalarop}),(\ref{scalardata}) for data that are non-characteristic. It is useful at this stage to recall how the condition for a point $x\in S$ to be characteristic for the Cauchy problem is expressed in terms of the Cauchy data $w$, the hypersurface $S$ and the principal part $G$ of $A$:
\begin{defi}\label{chardefi}
We say that a point $x\in S$ is characteristic for the Cauchy problem (\ref{scalarop}),(\ref{scalardata}) if
\begin{equation}\label{char}
g(x,\partial_xs(x))=0\,.
\end{equation}
The subset of characteristic points $x\in S$ will be denoted by $C$. 
\end{defi}
Leray's extension of the Cauchy-Kovalevskaya Theorem is precisely concerned with this case where the data are allowed to be characteristic on a non-empty subset of the initial hypersurface $S$, in which case in which the classical Cauchy-Kovalevskaya Theorem will fail to apply. The main content of Leray's theorem, which is stated below as Theorem \ref{Leraythm}, is first that the Cauchy problem (\ref{scalarop}),(\ref{scalardata}) will have a solution $u$ that is analytic away from a ramification locus that can be described geometrically in terms of the flow of a Hamiltonian vector field associated to $g$ and the initial hypersurface $S$, and second that a uniformizing map for the solution can be constructed explicitly through the solution of an auxiliary Cauchy problem for a Hamilton-Jacobi equation associated to the principal part $G$ of $A$ and the initial hypersurface $S$. We remark here that for the application of Leray's results to the local isometric embedding problem of the class of singular metrics considered in this paper, we shall be specifically interested in the case on which the data are characteristic at a single point $x\in S$, with an important non-degeneracy condition which will be specified below in Definition \ref{nondeg}.
 
The first step in Leray's construction is to extend the Cauchy problem
(\ref{scalarop}),(\ref{scalardata}) by the addition of an auxiliary
variable $\xi$ to the original set $x=(x_{1}\,\ldots,x_{l})$ of
independent variables. More precisely, we take an interval $(-\eta,\eta)$ and consider on $(-\eta,\eta)\times V \subset \mathbb{R}^{l+1}$ with coordinates $(\xi,x)$ the modified Cauchy problem given by
\begin{equation}\label{scalaropxi}
a(x,\partial_x)u(\xi,x)=v(\xi,x)\,,
\end{equation}
where the initial hypersurface $S$ is now replaced by the hypersurface
$S_{\xi}$ defined as the level set $s(x)=\xi$, where $v$ is assumed to
be analytic in $(-\eta,\eta)\times V$, and where the Cauchy problem is now given by prescribing the values of an analytic function $w(\xi,x)$ and its derivatives of order $0 \leq k\leq m-1$ on $S_{\xi}$,
\begin{equation}\label{scalardataxi}
u(\xi,x)-w(\xi,x)=O(s(\xi,x)^{m})\,,
\end{equation}
where $s(\xi,x):=s(x)-\xi$. The set of characteristic points for the Cauchy problem (\ref{scalaropxi}), (\ref{scalardataxi}) will be denoted by $C_{\xi}$.

Referring back to the original Cauchy problem (\ref{scalarop}), (\ref{scalardata}) for the operator $A$ defined by (\ref{DefA}), the key idea behind the uniformization of the solution near a characteristic point is to pass to the modified Cauchy problem (\ref{scalaropxi}), (\ref{scalardataxi}) and to construct the uniformizing map by means of the solution $\xi(t,x)$ of a suitably chosen auxiliary Cauchy problem, namely the following Cauchy problem for the Hamilton-Jacobi equation associated to the Hamiltonian $g$ defined by (\ref{Hamiltonian}): 
\begin{equation}\label{HJ}
\partial_t\xi+g(x,\partial_x\xi)=0\,,\quad \xi(0,x)=s(x)\,,
\end{equation} 
This Cauchy problem is solved as usual by the method of characteristic strips, thus giving rise to a unique analytic solution $\xi(t,x)$ defined for $|t| < \epsilon$ sufficiently small. Now by construction, the map $f: (-\epsilon,\epsilon)\times V \to (-\eta,\eta)\times V$ given by
\begin{equation}
(t,x)\mapsto f(t,x)=(\xi(t,x),x)\,,
\end{equation}
will map the hypersurface $t=0$ to the hypersurface $S_{\xi}$. The Jacobian determinant of this map is equal to $\partial_t\xi$, and the zero set of this determinant,  
\begin{equation}\label{zeroset}
Z_{\xi}:= \{(t,x)\in (-\epsilon,\epsilon)\times V\,\,|\,\, \partial_t\xi(t,x)=0\}\,,
\end{equation}
corresponds precisely by (\ref{HJ}) to the analytic subvariety of $(-\epsilon,\epsilon)\times V$ on which the characteristic condition $g(x,\partial_x\xi(t,x))=0$ is satisfied. This leads one naturally to define the characteristic conoid $K_{\xi}\subset (-\eta,\eta)\times V$ as the image under $f$ of $Z_{\xi}\subset  (-\epsilon,\epsilon)\times V$,
\begin{equation}\label{conoid}
K_{\xi}=f(Z_{\xi})\,.
\end{equation}
We thus see that for any analytic function $u(\xi,x)$, the map $(u \circ f)(t,x)=u(\xi(t,x),x)$ obtained by composition of $f$ with $u$ will be in general multivalued, and ramified precisely along the characteristic conoid $K_{\xi}$. 

It is useful at this stage to present the suggestive geometric description given by Leray of the characteristic conoid $K_{\xi}$ in terms of the characteristic strips for the Hamilton-Jacobi equation (\ref{HJ}). Indeed, using the canonical symplectic form $\omega$ on $T^{*}V$, 
\[
\omega=\sum_{i=1}^{l}dx_{i}\wedge dp_{i}\,,
\]
to define the Hamiltonian vector field $X_{g}$ on $T^{*}V$ corresponding to $g$, the characteristic strips associated to the Cauchy problem (\ref{HJ}) for the Hamilton-Jacobi equation are none but the integral curves $\gamma(t),\, -\epsilon <t< \epsilon$ of $X_{g}$ whose initial points $\gamma(0)$ are elements of the submanifold of $T^{*}V$ as a Lagrangian lift of $S$. In other words the characteristic strips $\gamma(t) = (x(t),y(t))$ are the solutions of the Cauchy problem given by
\begin{equation}\label{Ham}
\frac{dx}{dt}=\partial_{p}g\,,\quad \frac{dp}{dt}=-\partial_{x}g\,,\quad x(0,y)=y\,,\,p(0,y)=\partial_{y}s(y)\,,
\end{equation}
for the Hamiltonian system of ODEs associated $g$. The projections onto $V$ of the solution curves $\gamma(t),\, -\epsilon <t< \epsilon,$ of the Cauchy problem (\ref{Ham}) are by definition the bi-characteristic curves of the original Cauchy problem (\ref{scalarop}),(\ref{scalardata}). Leray proves in \cite{L} (see also \cite{G}) the following result, which gives the a geometric picture of the characteristic conoid $K_{\xi}$:
\begin{lemma}
The characteristic conoid $K_{\xi}$ is the union of the images of the bi-characteristic curves $x(t), \epsilon <t< \epsilon,$ whose initial points $x(0)$ are elements of the subset $C_{\xi}$ of characteristic points of $S_{\xi}$.  
\end{lemma}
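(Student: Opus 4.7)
The plan is to first characterize the zero set $Z_\xi$ defined in~\eqref{zeroset} by combining the Hamilton--Jacobi equation~\eqref{HJ} with the method of characteristics, and then to apply the uniformizing map $f$ of~\eqref{conoid} to translate this description into the desired statement for the conoid $K_\xi$. The decisive fact will be that the principal symbol $g$ is a first integral of its own Hamiltonian flow.

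Using~\eqref{HJ} directly, I would first rewrite $Z_\xi = \{(t,x) :\, g(x,\partial_x \xi(t,x))=0\}$, so the conoid is controlled by the vanishing of $g$ evaluated on the gradient of~$\xi$. The analytic solution of~\eqref{HJ} with $\xi(0,\cdot)=s$ is produced by the method of characteristics applied to the Hamiltonian system~\eqref{Ham}, and along each characteristic one has $p(t,y) = \partial_x \xi(t, x(t,y))$ for $|t|<\epsilon$; since $x(0,y)=y$, the map $y\mapsto x(t,y)$ is an analytic diffeomorphism onto its image for such $t$, so that $(t,x)\mapsto (t,y)$ is a valid reparametrization of $Z_\xi$. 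The key observation is then the direct computation
\[
\tfrac{d}{dt}\,g(x(t,y),p(t,y)) = \partial_x g\cdot\partial_p g - \partial_p g\cdot\partial_x g = 0,
\]
which shows that $g$ is conserved along each characteristic strip, hence $g(x(t,y),p(t,y)) = g(y,\partial_y s(y))$ for all admissible $t$. Combining the two observations yields that $(t,x(t,y))\in Z_\xi$ if and only if $g(y,\partial_y s(y))=0$, i.e., if and only if $y$ corresponds to a point of~$C_\xi$ in the sense of Definition~\ref{chardefi}, which identifies $Z_\xi$ with the union of the curves $t\mapsto(t,x(t,y))$ over characteristic initial points $y$.

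To conclude, I would track the $\xi$-component of $f(t,x(t,y))$ along these strips. Since $g$ is homogeneous of degree~$m$ in $p$, Euler's identity gives $\dot\xi = p\cdot\partial_p g - g = (m-1)g$ along each characteristic strip, and whenever $g\equiv 0$ along the strip one has $\xi(t,x(t,y)) \equiv s(y)$. Therefore $f(t,x(t,y)) = (s(y),\,x(t,y))$, and $K_\xi$ is the union, over characteristic initial points $y$, of the lifted bicharacteristic curves $t\mapsto(s(y),x(t,y))$, whose projections to $V$ are precisely the bicharacteristic curves $x(t,y)$ emanating from $y$, as asserted. The only mild technical point to monitor is that $y\mapsto x(t,y)$ remains a diffeomorphism on the full range $|t|<\epsilon$, which is already built into the choice of $\epsilon$ used to solve~\eqref{HJ} analytically, so no additional work is needed there.
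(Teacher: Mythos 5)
Your argument is correct and is precisely the standard one: the Hamilton--Jacobi equation identifies $Z_\xi$ with the vanishing locus of $g(x,\partial_x\xi)$, conservation of the principal symbol along its own Hamiltonian flow reduces this to the vanishing of $g(y,\partial_y s(y))$ at the initial point, and the Euler identity shows $\xi\equiv s(y)$ along null strips so that $f$ sends each such strip to a bicharacteristic in the slice $\{\xi=s(y)\}$, i.e.\ one emanating from $C_{s(y)}$. The paper itself omits the proof, deferring to Leray \cite{L} and Garding \cite{G}, and your reasoning reproduces that cited argument faithfully, so nothing further is needed.
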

If we fix a point $x\in C_{\xi}$, the subset of $K_{\xi}$ consisting of the images of the bi-characteristics $x(t)$, $-\epsilon <t< \epsilon $ such that $x(0)=x$ will be denoted by $K_{x}$ and referred to as the conoid with vertex at $x$. Our analyticity hypotheses on the coefficients $a_{\alpha}$ of $A$ imply immediately that $K_{x}\setminus \{x\}$ is an analytic submanifold of $V$. 

Finally, we need to define what is meant by an exceptional characteristic point for the Cauchy problem (\ref{scalaropxi}), (\ref{scalardataxi}):
\begin{defi}\label{nondeg}
A characteristic point $x\in C_{\xi}$ is said to be exceptional if either the initial hypersurface $S_{\xi}$ and the conoid $K_{x}$ are tangent to each other at infinitely many points in a neighbourhood of $x$, or the characteristic strip $\gamma(t),\,  -\epsilon <t< \epsilon,$ with initial point $\gamma(0)=(x,\partial_xs(x))$ consists of a single point.
\end{defi}

In our application of Leray's theory to the local isometric embeddings for the Riemannian metrics admitting an admissible singularity at the origin in the sense of Definition~1.1, the characteristic subset will consist of a single point, which will be a non-exceptional characteristic point in the sense of Definition \ref{nondeg} for the differential system governing local isometric embeddings.

We are now ready to state Leray's uniformization theorem:
\begin{thm}\label{Leraythm}
Let $\xi=\xi(t,x)$ be the solution of the Cauchy problem (\ref{HJ})
for the Hamilton-Jacobi equation (\ref{HJ}). In a neighborhood of a
non-exceptional characteristic point, the map $(t,x)\mapsto (\xi(t,x),x)$ uniformizes the solution $u(\xi,x)$ of the Cauchy problem (\ref{scalaropxi}), (\ref{scalardataxi}), in the sense that the composition
\[
u(\xi(t,x),x):=u\circ \xi\,,
\]
and its derivatives of order $1 \leq j\leq m-1$, 
\[
\partial_{\xi}^{j}u(\xi(t,x),x)\,,\quad 1 \leq j\leq m-1\,,
\]
are analytic for $(t,x)\in (-\epsilon,\epsilon)\times V$. Furthermore, the support of the ramification locus of the multi-valued function $u(\xi,x)$ solving the Cauchy problem (\ref{scalaropxi}), (\ref{scalardataxi}) lies in the set of points $(\xi,x)\in (-\eta,\eta)\times V$ for which the hypersurface $S_{\xi}$ is tangent to the conoid $K_{x}$. Likewise, the restriction of $u(\xi,x)$ to the locus $\xi(t,x)=0$ uniformizes the solution $u(0,x)$ of the original Cauchy problem (\ref{scalarop}), (\ref{scalardata}). Finally, the singularities of $u$ in the neighbourhood of any non-exceptional characteristic point are algebroid.
\end{thm}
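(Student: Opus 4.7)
\emph{Proposal.} The plan is to use the Hamilton–Jacobi map $(t,x)\mapsto(\xi(t,x),x)$ to exchange the characteristic $\xi$-direction of the original problem for the non-characteristic $t$-direction of an auxiliary Cauchy problem, and then apply the classical Cauchy–Kovalevskaya theorem in the $(t,x)$ variables. First I would solve the Cauchy problem~\eqref{HJ} by the method of characteristic strips~\eqref{Ham}, which by the analyticity of $g$ and $s$ yields a unique analytic solution $\xi(t,x)$ on $(-\epsilon,\epsilon)\times V$ satisfying $\xi(0,x)=s(x)$ and $\partial_t\xi=-g(x,\partial_x\xi)$. The vanishing locus of the Jacobian $\partial_t\xi$ is therefore exactly the analytic variety $Z_\xi$ of~\eqref{zeroset}, whose image under $f(t,x)=(\xi(t,x),x)$ is by definition the characteristic conoid $K_\xi$, so $f$ is a local analytic diffeomorphism away from $Z_\xi$.

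The key step is to transform the equation~\eqref{scalaropxi} by the substitution $\xi=\xi(t,x)$. I would introduce the auxiliary unknowns $U^{(j)}(t,x):=\partial_\xi^j u(\xi(t,x),x)$ for $0\le j\le m-1$ and derive, from~\eqref{scalaropxi} and its $\xi$-derivatives, a closed analytic system in $(t,x)$ satisfied by $(U^{(0)},\ldots,U^{(m-1)})$. The chain rule produces $\partial_t U^{(j)}=(\partial_t\xi)\,U^{(j+1)}$ and expresses every $\partial_x$ derivative of $u$ through $(t,x)$-derivatives of the $U^{(j)}$ modulo a factor of $\partial_t\xi$ in the denominator. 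The crucial algebraic miracle is that the coefficient of $\partial_\xi^m u$ produced when one pulls back the principal part $G$ is exactly $g(x,\partial_x\xi)$, which by~\eqref{HJ} equals $-\partial_t\xi$; this precisely cancels the denominator created by the chain rule, and the resulting system is analytic on all of $(-\epsilon,\epsilon)\times V$. After this reduction, a direct inspection shows that the $t$-direction is non-characteristic for the new system, so Cauchy–Kovalevskaya applied with initial data induced by~\eqref{scalardataxi} on $\{t=0\}$ yields unique analytic $U^{(j)}(t,x)$; this establishes the first two claims of the theorem, and the claim about $\xi(t,x)=0$ follows by restricting to the preimage of the original Cauchy surface.

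For the ramification locus, I would argue that $u(\xi,x)$ is obtained, away from $K_\xi$, by $(U^{(0)}\circ f^{-1})(\xi,x)$; since $f$ is a local diffeomorphism on the complement of $Z_\xi$, the only possible ramification of $u$ occurs above $f(Z_\xi)=K_\xi$, and the standard symplectic description of $Z_\xi$ as the set of $(t,x)$ for which $g(x,\partial_x\xi)=0$ (together with the Lemma identifying $K_\xi$ with the union of conoids $K_x$) shows this locus coincides with the set of $(\xi,x)$ at which $S_\xi$ is tangent to some $K_x$. The algebroid nature of the singularity near a non-exceptional characteristic point would then be reduced to understanding the local structure of $f$ near such a point: the two parts of Definition~\ref{nondeg} together guarantee that the tangency of $K_x$ with $S_\xi$ is of \emph{finite} order (ruling out trivial bi-characteristics) and that the critical locus $Z_\xi$ meets $\{t=0\}$ transversely in the appropriate generic sense, so the Weierstrass preparation theorem applied to $\partial_t\xi$ at the characteristic point exhibits $f$ as a finite-sheeted branched cover with algebroid branch behaviour, whence $u$ inherits an algebroid singularity along $K_x$.

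I expect the main obstacle to be the explicit verification of the algebraic identity that makes the pulled-back system analytic and non-characteristic in $(t,x)$, i.e.\ tracking how the lower-order terms of $A$ combine with the chain-rule contributions so that \emph{every} apparent $1/\partial_t\xi$ singularity cancels. This is a delicate combinatorial computation that underlies the whole argument and is essentially the heart of Leray's method; in the scalar linear case it is carried out cleanly in Gårding's exposition, and the non-linear Choquet-Bruhat extension requires the additional preliminary step of differentiating the original system to put it in quasi-linear form before performing the same substitution.
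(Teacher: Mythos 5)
The paper gives no proof of this statement: it is Leray's uniformization theorem, quoted from \cite{L} and \cite{GKL}, with the proof explicitly deferred to G\aa rding's exposition \cite{G}. Your sketch is a faithful reconstruction of that standard argument --- the Hamilton--Jacobi substitution, the cancellation $g(x,\partial_x\xi)=-\partial_t\xi$ that makes the transformed system for the $U^{(j)}$ analytic with no division by the Jacobian, Cauchy--Kovalevskaya in the $(t,x)$ variables, and the identification of the ramification support with the tangency locus of $S_\xi$ and the conoids $K_x$. The one point you compress (and correctly flag as the delicate step) is that the transformed system is not a classical Cauchy--Kovalevskaya system in $t$, since the right-hand sides carry higher-order $x$-derivatives of the $U^{(j)}$; it is handled by Leray's weighted version for systems with indices --- exactly the $m_k-n_j$ bookkeeping that resurfaces in \eqref{nonlinsysmod} --- and the non-exceptionality hypothesis of Definition \ref{nondeg} is what guarantees that the induced data on $\{t=0\}$ are non-characteristic for that system.
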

We briefly illustrate Theorem \ref{Leraythm} on a very simple example taken from \cite{G}. Consider the Cauchy problem (\ref{scalarop}), (\ref{scalardata}) for the operator
\[
a(x,\partial_x):=\partial_{x_{1}}\,,
\]
with initial data given on the hypersurface $S$ defined by 
\[
x_{2}-x_{1}^{p}=0\,,
\]
where $p>0$ is a positive integer. The solution of this Cauchy problem is given by
\[
u(x)=w(x_{2}^{{1}/{p}},x_{2}\ldots,x_{l})+\int_{x_{2}^{{1}/{p}}}^{x_{1}}v(s,x_{2}\ldots,x_{l})\,ds\,.
\] 
This solution is not analytic, but it is ramified along the hyperplane $x_{2}=0$, with $p$ branches. The ramification locus $x_{2}=0$ is tangent to $S$ (to order $p$) along the characteristic submanifold of $S$ given by $x_{1}=x_{2}=0$, and the uniformization map is simply given by 
\[(x_{1},x_{2},\ldots,x_{l})\mapsto (x_{1},t^{p},\ldots,x_{l})\,.
\] 
For reasons of space, the summary we have presented of Leray's results on the uniformization of the Cauchy problem is both limited in scope and lacking in significant illustrative examples. We refer the reader to the paper by Johnsson \cite{John} for the application of Leray's uniformization theory to a variety of Cauchy problems for second-order linear operators, with data on a quadric surface. This paper contains many explicit examples.   

We conclude this appendix by briefly describing Choquet-Bruhat's generalization of Leray's uniformization theorem to the case of analytic systems, which is proved in \cite{CB}. In this setting, the scalar linear PDE (\ref{scalaropxi}) is replaced a non-linear system of $N$ PDEs for $N$ unknowns, of the form 
\begin{equation}\label{nonlinsys}
F[u]:=\big(F_j(x,\xi,D^{m}u)\big)_{j=1}^N=0\,,
\end{equation}
where $u(\xi,x)=(u_{1}(\xi,x),\ldots,u_{N}(\xi,x))$, where
$D^{m}u=(u,\nabla u,\dots,\nabla^m u)$ stands for the set of
derivatives up to order $ m$ of $u$ with respect to $x=(x_{1},\dots,x_{l})$, and where $F$ is analytic with respect to all its arguments. As shown in \cite{CB}, there is no loss of generality in expressing the system (\ref{nonlinsys}) in the form 
\begin{equation}\label{nonlinsysmod}
F[u]:=\big(F_j(x,\xi,D^{{m}_{k}-n_{j}}u)\big)_{j=1}^N=0\,,
\end{equation}
in which $m_{k}, n_{j}$ (with $1\leq k,j \leq N$) are non-negative integers through which the highest-order derivative appearing in each equation is made explicit. The Cauchy problem (\ref{scalardataxi}) is then replaced by the prescription of  $N$ analytic functions $w_k(\xi,x),\,1\leq k \leq N,$ and its derivatives of order $0 \leq k\leq m_{k}-1$ on $S_{\xi}$, that is
\begin{equation}\label{dataxisyst}
u_k(\xi,x)-w_k(\xi,x)=O(s(\xi,x)^{m_{k}})\,.
\end{equation}

As we mentioned earlier, the first step taken in \cite{CB} is to put
the system (\ref{nonlinsys}) in quasi-linear form by
differentiation. The vanishing condition (\ref{char}) that defines
characteristic points in the scalar linear case is then replaced by
the vanishing condition of the determinant of a matrix $\cal A$
governing the linear dependence of the highest-order derivatives
appearing in each of the differentiated equations. To state the
result, let us first define the matrix $\mathcal A(\xi,x,p)$ of components
\[
{\mathcal{A}}_{jk}(\xi,x,p):=\sum_{|\alpha|={m_{k}-n_{j}}} \frac{\partial
    F_j}{\partial(\partial^\alpha u_k)}\, p^\alpha\,.
\]

\begin{defi}
We say that $x\in S$ is characteristic for the Cauchy problem
(\ref{nonlinsysmod}), (\ref{dataxisyst}) if
${\mathcal{A}}_*(x)=0$, where
\begin{equation}\label{nonlinchar}
 {\mathcal{A}}_*(x):=\det\big(\mathcal A(s(x),x,\partial_xs(x))\big|_{u_k(x)=w_k(x)}\big)=0\,.
\end{equation}
\end{defi}

\begin{rem}\label{R.nonex}
  A sufficient condition ensuring that a characteristic
  point $x\in S$ is non-exceptional is that the existence of a tangent
  vector $Y\in T_xS$ such that the derivative of ${\mathcal{A}}_*$
  along~$X$ at~$x$ is nonzero.
\end{rem}

The rest of the analysis in \cite{CB} proceeds along lines that are
essentially similar to the ones we have described above for the scalar
linear case, with the function induced by $\det \mathcal{A}(s(x),x,p)$
on the cotangent bundle $T^{*}V$ playing the role of the Hamiltonian
$g(x,p)$. The statement of Theorem \ref{Leraythm} then carries over to the case of non-linear systems with the slight modification that it is now each of the components $u_k(\xi,x)$ of $u$ with its derivatives of order $1 \leq j\leq m_{k}-1$ which gets uniformized by the map $(t,x)\mapsto (\xi(t,x)),x)$:

\begin{thm}\label{T.CB}
  In a neighborhood of a non-exceptional characteristic point, the compositions
\begin{equation}
u_k(\xi(t,x),x):=u_k\circ \xi\,,\quad 1\leq k\leq N\,,
\end{equation}
and their derivatives of order $1 \leq j\leq m_{k}-1$, 
\[
\partial_{\xi}^{j}u_k(\xi(t,x),x)\,,\quad 1\leq k\leq N\,,1 \leq j\leq m_{k}-1\,.
\]
are analytic for $(t,x)\in (-\epsilon,\epsilon)\times V$. The support of the ramification locus admits the same description as in Theorem \ref{Leraythm}. In particular, the restriction of $u(\xi,x)$ to the locus $\xi(t,x)=0$ will uniformize the solution $u(0,x)$ of the Cauchy problem given by (\ref{nonlinsysmod}) and (\ref{dataxisyst}) in the precise sense of Theorem \ref{Leraythm}. Likewise the singularities of $u$ in the neighbourhood of any non-exceptional characteristic point will be algebroid.

\end{thm}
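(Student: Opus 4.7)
The plan is to follow Leray's scalar-linear strategy as summarized in~\cite{G}, with the modifications introduced by Choquet-Bruhat~\cite{CB} to handle a nonlinear system. First, I would reduce~\eqref{nonlinsysmod} to quasi-linear form by differentiating each equation $F_j=0$ an appropriate number of times (governed by the $m_k-n_j$ bookkeeping) and forming linear combinations, so that the highest-order derivatives of $u$ appear linearly with coefficients that depend only on the lower-order jet of $u$. The coefficient matrix of the top symbol is exactly $\mathcal{A}(\xi,x,p)$, and the scalar Hamiltonian driving the uniformization is
\[
g(\xi,x,p):=\det\mathcal{A}(\xi,x,p)\,,
\]
evaluated along the formal Cauchy jet produced by~\eqref{dataxisyst}, analytic in $(\xi,x)$ and homogeneous of fixed degree in $p$.

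Second, I would construct the uniformizing map by solving the Hamilton-Jacobi Cauchy problem
\[
\partial_t\xi+g(\xi,x,\partial_x\xi)=0,\qquad \xi(0,x)=s(x),
\]
by the method of bicharacteristic strips, obtaining a unique analytic $\xi(t,x)$ on $(-\epsilon,\epsilon)\times V$. The Jacobian $\partial_t\xi$ vanishes on the analytic variety $Z_\xi$ whose image under $f:(t,x)\mapsto(\xi(t,x),x)$ is the characteristic conoid $K_\xi$. The non-exceptional hypothesis at the vertex, via Remark~\ref{R.nonex}, supplies a tangent direction along which the derivative of $\mathcal{A}_*$ does not vanish; this forces a finite order of contact between $Z_\xi$ and $\{t=0\}$ and makes $f$ a branched covering of finite degree near the vertex.

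Third, I would set $v_k(t,x):=u_k(\xi(t,x),x)$, substitute into the quasi-linear system, and apply the chain rule to re-express the derivatives of $u_k$ in terms of those of $v_k$ and $\xi$. The crucial algebraic cancellation is that the top-order coefficient of $\partial_t^{m_k}v_k$ in the composed system is, up to sign, $\partial_t\xi+g(\xi,x,\partial_x\xi)$, which vanishes identically by the Hamilton-Jacobi equation. After this cancellation the resulting system in $(t,x)$ becomes non-characteristic with respect to $\{t=0\}$, so the classical Cauchy-Kovalevskaya theorem supplies a unique analytic $v_k$ on $(-\epsilon,\epsilon)\times V$; the same argument applied to systems obtained by successive differentiation in $\xi$ yields analyticity of $\partial_\xi^j u_k\circ f$ for $1\leq j\leq m_k-1$. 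The description of the ramification locus then follows directly from the structure of $f$, exactly as in Theorem~\ref{Leraythm}, and the algebroid nature of the singularities follows by applying Weierstrass preparation to $\partial_t\xi$ at the non-exceptional vertex, which expresses the branched cover $f$ as a finite-sheeted cover whose branches are bounded by fractional powers of the distance to $K_\xi$.

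The main obstacle is closing the loop between linearization and existence. Because the Hamiltonian $g=\det\mathcal{A}$ depends on the unknowns, the Hamilton-Jacobi problem must initially be posed with $\mathcal{A}$ frozen along the \emph{formal} Taylor expansion of $u$ normal to $S_\xi$ extracted from the Cauchy data, and one must verify \emph{a posteriori} that the analytic solution produced in the third step is compatible with this choice. I would organize this compatibility as a fixed-point iteration on the pair $(u,\xi)$ in an appropriate scale of spaces of analytic germs, exploiting the strict inequalities $m_k-n_j\geq 0$ built into~\eqref{nonlinsysmod} to obtain a contraction in the style of Nagumo-Nirenberg. The careful bookkeeping of how many $\xi$-derivatives of $u_k$ get uniformized by $f$, which is what forces the upper bound $j\leq m_k-1$ in the conclusion, is a further delicate but entirely technical point.
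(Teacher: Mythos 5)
First, a point of comparison: the paper does not prove Theorem~\ref{T.CB}. It is quoted from Choquet-Bruhat \cite{CB}, and the appendix explicitly defers the proofs of both Theorem~\ref{Leraythm} and its nonlinear extension to \cite{G}, \cite{L}, \cite{GKL} and \cite{CB}. So there is no in-paper argument to measure your proposal against; what can be assessed is whether your outline reproduces the cited proof. At the level of architecture it does --- quasi-linearization by differentiation, the Hamiltonian built from $\det\mathcal{A}$, the Hamilton--Jacobi uniformizing map, and composition followed by Cauchy--Kovalevskaya are exactly the ingredients of \cite{CB} --- but two of your steps are not right as stated.

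The ``cancellation'' in your third step is misdescribed. If the coefficient of the top-order $t$-derivative of $v_k$ vanished identically, the composed system would be degenerate in $t$, not non-characteristic. The actual mechanism (already visible in the scalar case of \cite{G}) is a substitution, not a cancellation: writing $U_j:=\partial_\xi^j u\circ f$, the chain rule shows that the only occurrence of the undetermined derivative $\partial_\xi^{m}u\circ f$ in the composed operator carries the factor $g(x,\partial_x\xi)=-\partial_t\xi$, and since $\partial_t U_{m-1}=(\partial_\xi^{m}u\circ f)\,\partial_t\xi$, that term can be rewritten as $-\partial_t U_{m-1}$ plus terms in $U_0,\dots,U_{m-1}$ and their $x$-derivatives. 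One thereby obtains a first-order-in-$t$ Cauchy--Kovalevskaya system for the vector $(U_0,\dots,U_{m-1})$; nothing vanishes identically and no non-characteristicity of the original operator is restored. This rewriting is also precisely what forces the bound $j\le m_k-1$ in the conclusion, so the point you set aside as ``entirely technical'' is in fact the heart of the argument. Second, your closing fixed-point iteration between $u$ and $\xi$ is not how \cite{CB} handles the dependence of the characteristics on the unknown: there the Hamilton--Jacobi equation is adjoined to the differentiated quasi-linear system and the coupled system for $\xi$ together with the $U_{k,j}$ is solved by a single application of Cauchy--Kovalevskaya in the $(t,x)$ variables. This sidesteps the contraction estimate on ramified analytic germs that you assert but do not supply, and which is far from routine precisely because the domains of the iterates are themselves branched covers that move with the iteration.
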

\section*{Acknowledgements}
A.E.'s research is supported supported by the grants ERC StG 633152, SEV-2015-0554 and 20205CEX001. N.K.'s research is supported by NSERC grant RGPIN 105490-2018. One of us (N.K.) would like to thank the ICMAT for its warm hospitality during the research visits that led to the completion of this paper.

\end{document}